\newtheorem{theorem}{Theorem}[section]
\newtheorem{lemma}[theorem]{Lemma}
\newtheorem{proposition}[theorem]{Proposition}
\newtheorem{definition}[theorem]{Definition}
\newtheorem*{theorem*}{Theorem}
\newtheorem*{lemma*}{Lemma}
\newtheorem*{remark*}{Remark}
\newtheorem*{definition*}{Definition}
\newtheorem*{proposition*}{Proposition}
\newtheorem*{corollary*}{Corollary}
\numberwithin{equation}{section}
\newcommand{\real}{\mathbb{R}}
\let\ced=\c         
\def\qed{\,\unskip\kern 6pt \penalty 500
\raise -2pt\hbox{\vrule \vbox to8pt{\hrule width 6pt
\vfill\hrule}\vrule}\par}
\definecolor{darkblue}{rgb}{0.05, .05, .65}
\definecolor{darkgreen}{rgb}{0.1, .65, .1}
\definecolor{darkred}{rgb}{0.8,0,0}
\newcommand{\beqn}{\begin{equation}}
\newcommand{\eeqn}{\end{equation}}
\newcommand{\bear}{\begin{eqnarray}}
\newcommand{\eear}{\end{eqnarray}}
\newcommand{\bean}{\begin{eqnarray*}}
\newcommand{\eean}{\end{eqnarray*}}
\begin{document}

\title{\huge \bf Eternal solutions for a reaction-diffusion equation with weighted reaction}

\author{
\Large Razvan Gabriel Iagar\,\footnote{Departamento de Matem\'{a}tica
Aplicada, Ciencia e Ingenieria de los Materiales y Tecnologia
Electr\'onica, Universidad Rey Juan Carlos, M\'{o}stoles,
28933, Madrid, Spain, \textit{e-mail:} razvan.iagar@urjc.es},
\\[4pt] \Large Ariel S\'{a}nchez,\footnote{Departamento de Matem\'{a}tica
Aplicada, Ciencia e Ingenieria de los Materiales y Tecnologia
Electr\'onica, Universidad Rey Juan Carlos, M\'{o}stoles,
28933, Madrid, Spain, \textit{e-mail:} ariel.sanchez@urjc.es}\\
[4pt] }
\date{}
\maketitle

\begin{abstract}
We prove existence and uniqueness of \emph{eternal solutions} in self-similar form growing up in time with exponential rate for the weighted reaction-diffusion equation
$$
\partial_tu=\Delta u^m+|x|^{\sigma}u^p,
$$
posed in $\real^N$, with $m>1$, $0<p<1$ and the critical value for the weight
$$
\sigma=\frac{2(1-p)}{m-1}.
$$
Existence and uniqueness of some specific solution holds true when $m+p\geq2$. On the contrary, no eternal solution exists if $m+p<2$. We also classify exponential self-similar solutions with a different interface behavior when $m+p>2$. Some transformations to reaction-convection-diffusion equations and traveling wave solutions are also introduced.
\end{abstract}

\

\noindent {\bf MSC Subject Classification 2020:} 35B33, 35B36, 35C06,
35K10, 35K57.

\smallskip

\noindent {\bf Keywords and phrases:} eternal solutions, reaction-diffusion equations,
weighted reaction, exponential self-similar solutions, phase plane analysis, strong reaction.

\section{Introduction}

The goal of this paper is to investigate the availability of \emph{eternal solutions} of exponential self-similar type for the reaction-diffusion equation with weighted reaction
\begin{equation}\label{eq1}
u_t=\Delta u^m+|x|^{\sigma}u^p,
\end{equation}
posed for $(x,t)\in\real^N\times(0,\infty)$, in the following range of exponents
\begin{equation}\label{exp}
m>1, \qquad 0<p<1, \qquad \sigma=\frac{2(1-p)}{m-1}.
\end{equation}
More precisely, we look for radially symmetric solutions of the form
\begin{equation}\label{SSS}
u(x,t)=e^{\alpha t}f(\xi), \qquad \xi=|x|e^{-\beta t},
\end{equation}
with self-similar exponents $\alpha$, $\beta$ to be determined. Let us notice at this point that solutions in the form \eqref{SSS} are in fact solutions that can be defined for any $t\in\real$ (that is, also backward in time) and this is why they are usually called in literature \emph{eternal solutions}. The availability of \emph{eternal solutions} is a very uncommon feature for parabolic equations, due to the fact that they usually enjoy smoothing effects and this implies a specific sense of the time flow, as it is for example well-known for the heat equation or the porous medium equation. That is why, only some very specific parabolic equations allow the existence of such solutions. To give some noticeable examples of eternal solutions from previous literature, in the case of the fast diffusion equation they were established for the critical case $m=m_c$, where $m_c=(N-2)_{+}/N$ \cite{GPV00, VazSm}, and then also discovered for the parabolic $p$-Laplacian equation with the specific exponent $p_c=2N/(N+1)$ through a change of variable between the two equations \cite{ISV08}. Another significant model allowing for eternal solutions is the logarithmic diffusion posed in dimension $N=2$, which is related to the Ricci flow in $\real^2$ according to Daskalopoulos and Sesum \cite{DS06}, while such an interface between two regimes with different behaviors takes place for the porous medium equation with standard absorption
$$
u_t=\Delta u^m-u^q, \qquad m>1, \ q>0
$$
for the precise value $q=1$, as it is given by the investigation of the limiting ranges in \cite{VazquezSurvey}. More recently, \emph{eternal solutions} were established and completely classified by Lauren\ced{c}ot and one of the authors in \cite{IL13} for a parabolic equation combining $p$-Laplacian fast diffusion and gradient-type absorption, namely
$$
u_t={\rm div}(|\nabla u|^{p-2}\nabla u)-|\nabla u|^{p/2}, \qquad {\rm with} \ p_c=\frac{2N}{N+1}<p<2.
$$
In all these cases, the existence of eternal solutions is linked to specific exponents that realize the interface between two regimes of different behaviors, which in all the above mentioned cases are the range of algebraic (power-like) time decay as $t\to\infty$ (which occurs for higher exponents than the specific one where eternal solutions exist) and the range of finite time extinction (which occurs for smaller exponents). Moreover, in the previously mentioned cases the classification of exponential (eternal) solutions is usually very complicated. Just as an example, for the critical case of the porous medium equation
$$
u_t=\Delta u^m, \qquad {\rm with} \ m=m_c=\frac{N-2}{N}, \ N\geq3,
$$
it is easy to obtain eternal solutions decaying exponentially in time (see \cite[Section 5.6.1]{VazSm}) but solutions with a super-exponential decay, namely $\exp(-Ct^{N/(N-2)})$ where $C>0$, were also constructed in \cite{GPV00}. Thus, the dynamic of some of these specific cases in which eternal solutions do exist may be rather complex and it is always an interesting problem to study.

Coming back to our Eq. \eqref{eq1}, there are two facts indicating that \emph{eternal solutions}, in this case with exponential grow-up in time, might exist. On the one hand, we have shown in two recent papers \cite{IS20b, IS21b} that self-similar blow-up profiles to Eq. \eqref{eq1} with $m>1$, $p\in(0,1)$ (only in dimension $N=1$) exist precisely for $\sigma>2(1-p)/(m-1)$, and we also classified them. This suggests us that the precise value of $\sigma$ in \eqref{exp} might be a borderline case in the style of the above quoted ones, limiting in this case the regime of finite time blow-up and (as it is not yet proved but expected) the regime of algebraic grow-up in time. On the other hand, an easy change of variable between the following two equations
$$
u_t=\Delta u^m+u, \qquad {\rm and} \qquad v_t=\Delta v^m,
$$
the former being exactly the limit case of our range \eqref{exp} with $\sigma=0$, $p=1$, gives a rather big number of eternal solutions for the first equation, corresponding to all the self-similar equations (with algebraic decay) for the second one. This is how we decided to address the problem of studying the availability of \emph{eternal solutions} to Eq. \eqref{eq1} for the range of exponents \eqref{exp} and classify them if possible. By introducing the ansatz \eqref{SSS} into Eq. \eqref{eq1} we readily obtain that the self-similar exponents should satisfy
\begin{equation}\label{relation}
\alpha=\frac{2}{m-1}\beta,
\end{equation}
while the profile $f(\xi)$ solves the following differential equation
\begin{equation}\label{ODE}
(f^m)''(\xi)+\frac{N-1}{\xi}(f^m)'(\xi)-\alpha f(\xi)+\beta\xi f'(\xi)+\xi^{\sigma}f(\xi)^p=0.
\end{equation}
Let us notice here that the single relation \eqref{relation} does not ensure the positivity of $\alpha$ and $\beta$. But this is easy to see from the mass calculation. Defining
$$
M(t)=\int_{\real^N}u(x,t)\,dx,
$$
it readily follows from Eq. \eqref{eq1} that the mass is increasing in time due to the reaction term. But if we plug the ansatz \eqref{SSS} into the definition of the mass, we find that for eternal exponential self-similar solutions the mass evolves as it is indicated below
$$
M(t)=e^{(\alpha+N\beta)t}\int_{\real^N}f(y)\,dy,
$$
which forces $\alpha+N\beta>0$. This, together with \eqref{relation}, give the positivity of both exponents.

We will thus look throughout this paper to profiles $f(\xi)$ solutions to Eq. \eqref{ODE}. Let us mention here that we are only interested in non-negative profiles, that is $f(\xi)\geq0$ for any $\xi\geq0$. More precisely, we introduce the following
\begin{definition}
A solution $f(\xi)$ to Eq. \eqref{ODE} is called a \textbf{good profile with interface} if it satisfies simultaneously the following conditions:

(a) $f(0)=A>0$, $f'(0)=0$.

(b) There exists $\xi_0\in(0,\infty)$ such that $f(\xi_0)=0$, $f(\xi)>0$ for any $\xi\in(0,\xi_0)$ and the interface condition $(f^m)'(\xi_0)=0$ is fulfilled.
\end{definition}
We stress here that in our previous papers \cite{IS20b, IS21b} there were different types of good profiles with interface, in particular profiles starting with $f(0)=0$ and satisfying a left-interface condition at $\xi=0$, namely $(f^m)'(0)=0$. The subsequent analysis will show that such profiles do no longer exist in our range of exponents \eqref{exp}, this is why we do not consider them in the definition. Moreover, in other papers investigating eternal solutions, strictly positive profiles presenting a tail as $\xi\to\infty$ are obtained \cite{GPV00, IL13}. In our case we also do not have such profiles, as we shall prove that all the solutions (of any type) to Eq. \eqref{ODE} have to intersect the axis $\xi=0$ at some finite point. Another significant difference between the case we are dealing with here and the ranges of exponents studied in our previous works \cite{IS20b, IS21b} is the fact that \emph{the self-similar exponents $\alpha$ and $\beta$ are not fixed}: one of them is totally free and is in fact the most important parameter of the problem. This is a situation which is characteristic to the very specific cases where eternal solutions exist, see for example \cite{IL13}.

It seems that there are two free parameters in the problem: the value $f(0)=A$ and one of the self-similar exponents, let us choose $\alpha>0$. But it is straightforward to check that if $f(\xi)$ is a good profile with interface such that $f(0)=A>0$, the following rescaling
\begin{equation}\label{resc}
g_{\lambda}(\xi)=\lambda^{-2/(m-1)}f(\lambda\xi), \qquad g_{\lambda}(0)=\lambda^{-2/(m-1)}A
\end{equation}
gives another good profile to Eq. \eqref{ODE}. It is thus sufficient to fix $A=1$ and study the initial value problem for Eq. \eqref{ODE} with $f(0)=1$, $f'(0)=0$. We will thus restrict in the statement of the main results of this work to profiles with $f(0)=1$. We are now in a position to state the main results of this paper.
\begin{theorem}[Existence and uniqueness when $m+p>2$]\label{th.1}
Assume that $m$, $p$ and $\sigma$ are as in \eqref{exp} and that $m+p>2$. Then there exists a unique exponent $\alpha^*>0$ such that good profiles with interface such that $f(0)=1$ exist for any self-similar exponent $\alpha\in[\alpha^*,+\infty)$ and do not exist for $\alpha\in(0,\alpha^*)$. Moreover, we can classify them according to their interface behavior as follows:

\medskip

(a) For $\alpha=\alpha^*$, there exists a \textbf{unique} good profile with interface $f(\xi)$ having the interface behavior
\begin{equation}\label{TypeI}
f(\xi)\sim C(\xi_0-\xi)_{+}^{1/(m-1)}, \qquad {\rm as} \ \xi\to\xi_0 \ \ ({\rm interface \ of \ Type \ I})
\end{equation}

(b) For any $\alpha\in(\alpha^*,\infty)$ fixed, there exists a \textbf{unique} good profile with interface $f(\xi)$ having the interface behavior
\begin{equation}\label{TypeII}
f(\xi)\sim C(\xi_0-\xi)_{+}^{1/(1-p)}, \qquad {\rm as} \ \xi\to\xi_0 \ \ ({\rm interface \ of \ Type \ II})
\end{equation}
\end{theorem}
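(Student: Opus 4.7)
My plan is to recast the ODE~\eqref{ODE} as an autonomous dynamical system and read off the good profiles with interface as connecting orbits in its phase space. A first step is to introduce phase--plane variables of the form
\[
X=\frac{\xi(f^m)'}{f^m},\qquad Y=\frac{\xi^{2}}{m\,f^{m-1}},\qquad Z=\frac{\xi^{\sigma+2}}{m\,f^{m-p}},
\]
together with $\eta=\log\xi$. The critical choice $\sigma=2(1-p)/(m-1)$ is exactly the one that makes $Z$ invariant under the scaling~\eqref{resc}, so the weighted reaction enters on an equal footing with the diffusive and drift terms and one genuinely obtains an autonomous three--dimensional flow. I then expect three distinguished critical points: $P_{0}$ encoding the boundary data $f(0)=1$, $f'(0)=0$; $P_{1}$ encoding a Type~I interface $f(\xi)\sim C(\xi_{0}-\xi)^{1/(m-1)}$; and $P_{2}$ encoding a Type~II interface $f(\xi)\sim C(\xi_{0}-\xi)^{1/(1-p)}$. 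Routine linearizations should show that, under the standing assumption $m+p>2$, the unstable manifold of $P_{0}$ is one--dimensional (thanks to the rescaling~\eqref{resc} normalizing $A=1$, producing a unique orbit per value of $\alpha$), the stable manifold of $P_{2}$ is two--dimensional (a codimension--zero target), and the stable manifold of $P_{1}$ is one--dimensional (a codimension--one target).

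The central step is then a shooting argument in the free parameter $\alpha>0$. Let $\mathcal{A}\subset(0,\infty)$ be the set of exponents for which the orbit leaving $P_{0}$ converges to $P_{2}$, producing a Type~II good profile. I would prove that $\mathcal{A}$ is open (by transversality of a codimension--zero connection), contains a half--line $[\alpha_{1},\infty)$ (by a barrier argument in which the self--similar drift $\beta\xi f'$ dominates the reaction and the diffusion as $\alpha\to\infty$, forcing the orbit into the basin of $P_{2}$), and is bounded below by some $\alpha^{*}>0$ (by a monotonicity or energy estimate ruling out that very small $\alpha$ allows the trajectory to reach $P_{2}$ while remaining non--negative). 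Setting $\alpha^{*}=\inf\mathcal{A}$, openness of $\mathcal{A}$ combined with compactness of the trajectory at $\alpha=\alpha^{*}$ and the exclusion of every other critical point from the $\omega$--limit should single out $P_{1}$, producing exactly the Type~I profile claimed in~\eqref{TypeI}.

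Uniqueness comes on two fronts: at $\alpha=\alpha^{*}$, from the combination of a one--dimensional unstable manifold out of $P_{0}$ with a one--dimensional stable manifold into $P_{1}$; and for each $\alpha>\alpha^{*}$, either from strict monotonicity of the shooting map in $\alpha$ (a Sturm--type comparison between two orbits at different values of the parameter) or from the fact that inside the basin of $P_{2}$ the orbit from $P_{0}$ is determined by $\alpha$ alone. The hard part will be the global control of the flow in the regime $\alpha\in(0,\alpha^{*})$: excluding orbits that neither reach zero in finite $\xi$ nor limit onto a good critical point (trajectories that oscillate, blow up, or leave the physical region $f\geq0$) typically requires delicate Lyapunov or monotonicity estimates along the flow, and this is where the assumption $m+p>2$ should be used sharply, both to rule out Type~I behavior for $\alpha>\alpha^{*}$ and to give the correct sign of the eigenvalues at $P_{1}$ and $P_{2}$.
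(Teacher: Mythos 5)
Your overall strategy --- recast \eqref{ODE} as an autonomous system, identify the critical points carrying the initial data and the two interface behaviors, shoot in the free exponent, and extract $\alpha^*$ topologically with a monotonicity argument for uniqueness --- is the same as the paper's. However, there are two concrete problems with the proposal as written.

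First, your three-dimensional system is degenerate precisely at the critical value $\sigma=2(1-p)/(m-1)$: since $\sigma+2=2(m-p)/(m-1)$, one has identically $Z=\frac{1}{m}(mY)^{(m-p)/(m-1)}$, i.e.\ $Z$ is a function of $Y$ alone. Your ``genuinely autonomous three-dimensional flow'' is therefore supported on a two-dimensional invariant surface, and the codimension counts you propose for the stable/unstable manifolds of $P_0$, $P_1$, $P_2$ (taken in three-dimensional space) are not the relevant ones. This functional dependence is exactly the structural observation the paper exploits: it works directly with the two variables $X=\frac{\alpha}{2m}\xi^2f^{1-m}$ and $Y=\xi f'/f$ (essentially your $Y$ and your $X$, up to constants), the weighted reaction entering as the nonlinear term $-KX^{(m-p)/(m-1)}$ in \eqref{PPSyst} with the single shooting parameter $K$ of \eqref{param}, a decreasing function of $\alpha$. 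The authors state explicitly that the three-dimensional system of their earlier papers is no longer satisfactory at a technical level in this critical case; moreover, in the reduced plane the interface behaviors are carried by critical points \emph{at infinity} ($Q_1$, a stable node, for Type II; $Q_4$, a saddle, for Type I), not by finite hyperbolic equilibria as your sketch assumes, so the local analysis requires the Poincar\'e sphere and a further desingularizing change of variables.

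Second, even granting the reduction, your argument does not yet yield the statement ``exist for all $\alpha\geq\alpha^*$, do not exist for $\alpha<\alpha^*$'': defining $\alpha^*=\inf\mathcal{A}$ with $\mathcal{A}$ open, containing a half-line and bounded below does not make $\mathcal{A}$ connected, nor does it show that the Type I connection occurs for exactly one value of $\alpha$. The paper closes both gaps simultaneously with an opposite-monotonicity argument: the second-order expansion of the orbit leaving $P_0$ shows it moves downward as $K$ increases, the tangent line of the orbit entering the Type I saddle moves upward as $K$ increases, and the monotonicity of $dY/dX$ in $K$ propagates these orderings globally; hence the saddle--saddle connection occurs for a unique $K^*$ and the parameter sets are precisely $(0,K^*)$, $\{K^*\}$, $(K^*,\infty)$. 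You mention a Sturm-type comparison as one of two options --- that is the load-bearing step, not an alternative; your other option (``the orbit is determined by $\alpha$ alone'') gives only uniqueness of the profile for each fixed $\alpha$, not the interval structure. Finally, your treatment of $\alpha<\alpha^*$ is the vaguest part: the paper identifies concretely where the orbit goes for $K$ large (to the node $Q_3$ at infinity, i.e.\ profiles vanishing like $(\xi_0-\xi)^{1/m}$ and changing sign, hence not good profiles) by means of an explicit straight-line barrier $(m-1)X+Y=\lambda K$; an appeal to ``delicate Lyapunov or monotonicity estimates'' is not a substitute for exhibiting such a barrier.
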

We notice that there are two different interface behaviors, which are totally different for $m+p>2$. These different interface behaviors have been noticed first for the reaction-diffusion equation without weight, that is with $\sigma=0$ (see for example \cite{dP94}) and more recently these two different interfaces were analyzed in our recent paper \cite{IS20b} in connection with the \emph{interface equation}, showing that they are essentially different since they satisfy two completely different differential equations for their speed of advance. The presence of these two types of interface is a characteristic of the range $0<p<1$. 

\medskip 

\noindent \textbf{Remark.} The classification in Theorem \ref{th.1} reminds us about the classification of the traveling waves to, for example, the celebrated Fisher-KPP equation and more general reaction-diffusion-convection equations \cite{GiBook}, for which there exists a unique minimal speed $c^*$ for which traveling wave solutions exist if and only if $c\geq c^*$ and the traveling wave with $c=c^*$ is unique. It is not completely surprising the fact that we can map our equation into a reaction-convection-diffusion equation (see Section \ref{sec.transf}) and then use our results to classify its traveling wave solutions.

\medskip

When $m+p=2$ the two types of interfaces coincide from a qualitative point of view, but we can obtain further information by getting an explicit value of $\alpha^*$.
\begin{theorem}[Existence and uniqueness when $m+p=2$]\label{th.1bis}
Assume that $m$, $p$ and $\sigma$ are as in \eqref{exp} and that $m+p=2$. Let
\begin{equation}\label{alpha.equal}
\alpha^*=\frac{4\sqrt{m}}{m-1}.
\end{equation}
Then, for any $\alpha\in[\alpha^*,\infty)$ fixed, there exists a \textbf{unique} good profile with interface $f(\xi)$ with $f(0)=1$ and having the interface behavior given by \eqref{TypeI}. There is no good profile with interface for $\alpha\in(0,\alpha^*)$.
\end{theorem}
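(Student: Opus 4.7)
The plan is to adapt the phase-plane machinery developed for Theorem \ref{th.1} to the borderline exponent $m+p=2$, exploiting the algebraic simplification that this condition produces in the analysis at the interface. The starting point is the local expansion. When $m+p=2$ the two interface exponents $1/(m-1)$ and $1/(1-p)$ coincide, and one also has $\sigma=2$. Plugging the candidate behavior $f(\xi)\sim C(\xi_0-\xi)^{1/(m-1)}$ into \eqref{ODE}, the three terms $(f^m)''$, $\beta\xi f'$ and $\xi^{\sigma}f^p$ all contribute at the same leading order $(\xi_0-\xi)^{(2-m)/(m-1)}$, while $-\alpha f$ and $(N-1)(f^m)'/\xi$ are of strictly higher order. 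Matching coefficients yields the algebraic relation
\begin{equation*}
m\,Y^2 - (m-1)\beta\xi_0\,Y + (m-1)^2\xi_0^2 = 0, \qquad Y:=C^{m-1},
\end{equation*}
whose discriminant equals $(m-1)^2\xi_0^2(\beta^2-4m)$. A positive real coefficient $C$ can therefore be matched if and only if $\beta\geq 2\sqrt{m}$, which via \eqref{relation} is precisely $\alpha\geq\alpha^*=4\sqrt{m}/(m-1)$. This already settles the nonexistence assertion for $\alpha\in(0,\alpha^*)$.

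For existence and uniqueness when $\alpha\geq\alpha^*$, I would replicate the shooting argument of Theorem \ref{th.1}: recast \eqref{ODE} as an autonomous dynamical system in a suitable phase space, identify the equilibrium corresponding to the centre at $\xi=0$ (with $f(0)=1$, $f'(0)=0$) and those corresponding to the interface behaviors, and track the unique trajectory emanating from the centre as $\alpha$ varies. Since in the case $m+p=2$ the two equilibria that govern the Type-I and Type-II interfaces in Theorem \ref{th.1} have merged into a single equilibrium (they correspond to the two roots of the quadratic above and collide when $\alpha=\alpha^*$), the topological picture simplifies and the trajectory is forced to reach the surviving Type-I interface for every $\alpha\geq\alpha^*$. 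Uniqueness for each fixed $\alpha$ then follows from the uniqueness of the initial-value problem at the singular origin $\xi=0$, exactly as in the proof of Theorem \ref{th.1}.

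The main obstacle will be the critical case $\alpha=\alpha^*$ itself, where the discriminant vanishes and the interface equilibrium becomes degenerate of saddle-node type. One must verify, through a careful expansion of the stable manifold together with a comparison argument in the phase plane, that the shooting trajectory actually reaches this degenerate equilibrium rather than escaping to infinity or oscillating past it. A secondary technical point is to exclude, along the shooting, alternative asymptotic behaviors such as a left interface at $\xi=0^+$ with $(f^m)'(0)=0$ or a strictly positive tail as $\xi\to\infty$; both possibilities are already ruled out in our range \eqref{exp} by the arguments supporting the remark following the definition of a good profile with interface, and they transfer without modification to the borderline exponent $m+p=2$.
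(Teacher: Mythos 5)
Your discriminant computation is correct and it is the right way to see where $\alpha^*=4\sqrt{m}/(m-1)$ comes from: it is the coefficient-matching counterpart of the paper's observation that, when $m+p=2$, the critical points at infinity $Q_1,Q_4$ solve $\lambda^2+(m-1)\lambda+K=0$ and hence exist only for $K\le(m-1)^2/4$, i.e.\ $\alpha\ge 4\sqrt m/(m-1)$ via \eqref{param}. However, your nonexistence argument has a genuine gap: matching coefficients in the ansatz $f(\xi)\sim C(\xi_0-\xi)^{1/(m-1)}$ only shows that \emph{that particular} interface asymptotics is unavailable for $\alpha<\alpha^*$. A good profile with interface is defined only by $f(\xi_0)=0$, $f>0$ on $(0,\xi_0)$ and $(f^m)'(\xi_0)=0$; nothing a priori forces the power-law form (one must exclude, e.g., logarithmically corrected or otherwise non-self-similar approaches to the contact point). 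The paper closes this by a global phase-plane argument: for $K>(m-1)^2/4$ the points $Q_1$ and $Q_4$ literally disappear from the Poincar\'e sphere, so the unique orbit out of $P_0$ can only terminate at $Q_3$, whose profiles vanish like $(\xi_0-\xi)^{1/m}$ and violate $(f^m)'(\xi_0)=0$. You need some argument of this global type, not just a local matching.

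The existence half is also thinner than it needs to be, and in one respect mischaracterizes the structure. For $m+p=2$ there is no shooting in the sense of Theorem \ref{th.1}: the paper proves (Proposition \ref{prop.equal}) that for \emph{every} $K\in(0,(m-1)^2/4)$ the orbit out of $P_0$ enters the stable node $Q_1$ directly, via an isocline decomposition of the $(y,z)=(Y/X,1/X)$ plane and a barrier (the increasing branch of the isocline $z(y)$) that separates the orbit from $P_0$ from the stable manifold of the saddle $Q_4$; no exceptional parameter value is being hunted. Your assertion that "the trajectory is forced to reach the surviving Type-I interface" is precisely the statement requiring this barrier construction, and your proposal defers it. Likewise, the degenerate case $K=(m-1)^2/4$, which you flag as the main obstacle, is handled in the paper not by expanding the stable manifold of the saddle-node but by a soft topological argument: the set of $K$ for which the orbit connects to $Q_3$ is open, contains $((m-1)^2/4,\infty)$ and no $K<(m-1)^2/4$, hence equals $((m-1)^2/4,\infty)$, leaving the saddle-node as the only possible endpoint at $K=(m-1)^2/4$. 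So your outline identifies the correct threshold and the correct merging phenomenon, but the two load-bearing steps (global nonexistence below $\alpha^*$, and the barrier argument above it) are missing rather than merely technical.
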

In the complementary case $m+p<2$ the result is very simple.
\begin{theorem}[Non-existence when $m+p<2$]\label{th.2}
If $m$, $p$ and $\sigma$ are as in \eqref{exp} and $m+p<2$, there exists no good profile with interface.
\end{theorem}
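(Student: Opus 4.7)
The plan is to argue by contradiction and rule out every possible asymptotic behavior of $f$ at a putative interface point. Suppose $f$ is a good profile with interface $\xi_0\in(0,\infty)$. Since $\xi_0>0$, the radial drift $\tfrac{N-1}{\xi}(f^m)'$ stays of strictly lower order than $(f^m)''$ as $\xi\to\xi_0^-$, so the dominant balance in \eqref{ODE} only involves the four terms $(f^m)''$, $\beta\xi f'$, $\xi^{\sigma}f^{p}$ and $-\alpha f$. A power-law ansatz $f(\xi)\sim C(\xi_0-\xi)^{\gamma}$ produces, up to $-\alpha f$ (which is always subleading), exactly three candidate scale-invariant balances: diffusion versus convection with $\gamma=1/(m-1)$ (Type I, see \eqref{TypeI}), convection versus reaction with $\gamma=1/(1-p)$ (Type II, see \eqref{TypeII}), and diffusion versus reaction with $\gamma=2/(m-p)$.

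The heart of the argument is that $m+p<2$ is incompatible with each of the three balances. For Type I, $(f^m)''$ and $\beta\xi f'$ are of order $(\xi_0-\xi)^{(2-m)/(m-1)}$ while $\xi^{\sigma}f^{p}$ is of order $(\xi_0-\xi)^{p/(m-1)}$, and the reaction is subleading if and only if $p>2-m$, i.e.\ $m+p>2$. For Type II, the balanced pair is of order $(\xi_0-\xi)^{p/(1-p)}$ while $(f^m)''$ is of order $(\xi_0-\xi)^{(m+2p-2)/(1-p)}$, and diffusion is subleading if and only if $m+p>2$. For the third balance with $\gamma=2/(m-p)$, which is the only scale-invariant pairing still formally available in our range, I observe that near $\xi_0$ the decreasing profile satisfies $(f^m)'\to 0^{-}$ from below, so $(f^m)''>0$ on a left neighborhood of $\xi_0$; combined with $\xi^{\sigma}f^{p}>0$, this asks two strictly positive terms of the same order to cancel one another, while a routine check using $m>1>p$ and $m+p<2$ shows that the remaining terms of \eqref{ODE} are of strictly lower order. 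No such cancellation is possible, so this third asymptotic is also inconsistent.

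To turn the balance heuristic into a rigorous theorem I would rely on the phase plane analysis developed in \cite{IS20b, IS21b} for Eq.~\eqref{eq1} with adjacent values of $\sigma$: after passing to self-similar planar variables (of the form $X=\xi(f^{m})'/f^m$, $Y\propto\xi^{\sigma+2}f^{p-1}$ or equivalent) the three candidate local balances above correspond to three distinguished critical points on the boundary $\{f=0\}$ of the compactified phase portrait, and the exponent inequalities of the previous paragraph translate directly into sign conditions on the eigenvalues of the linearization at those points. The hypothesis $m+p<2$ makes these eigenvalues lose the signs needed for any orbit coming from the central profile (the critical point associated with $f(0)=1$, $f'(0)=0$) to reach any interface critical point, delivering the contradiction.

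The main obstacle is the rigorous exclusion of \emph{every} possible vanishing behavior at $\xi_0$, including non power-law or oscillatory profiles. The phase plane method handles this naturally, since any orbit that reaches the interface boundary of the compactified portrait must converge to one of its critical points, and the Lyapunov/monotonicity arguments already used in \cite{IS20b, IS21b} adapt with minor modifications. A more direct alternative, useful as a sanity check, is to integrate \eqref{ODE} against $\xi^{N-1}$ from $\xi$ close to $\xi_0$ up to $\xi_0$ and compare the resulting integrals of $f$ and $f^p$ near the interface; the dominance of $f^p$ over $f$ for $p<1$ together with $(f^m)'(\xi_0)=0$ produces the same obstruction. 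Once the classification step is in place, the three exponent inequalities close the proof.
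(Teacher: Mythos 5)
Your local-balance computations are correct, and the three exponent inequalities you derive are indeed the analytic reason interfaces fail for $m+p<2$; but your route is genuinely different from the paper's. You argue by contradiction at a putative interface point, enumerating the scale-invariant power-law balances of \eqref{ODE} and showing each is incompatible with $m+p<2$ (Type I and Type II both force $m+p>2$, while the diffusion--reaction balance $\gamma=2/(m-p)$ would require two positive leading terms to cancel). The paper instead argues globally and forward in $\xi$: it keeps the same phase plane \eqref{PPSyst} used for $m+p\geq2$ and observes that when $m+p<2$ the reaction term $KX^{(m-p)/(m-1)}$ becomes super-quadratic, so that on the Poincar\'e sphere the critical points $Q_1$ and $Q_4$ --- precisely the ones carrying the Type II and Type I interface behaviors \eqref{TypeII} and \eqref{TypeI} --- disappear altogether, leaving only $Q_2$ and $Q_3$; the orbit from $P_0$ must therefore end at $Q_3$, whose profiles cross zero with $f(\xi)\sim C(\xi_0-\xi)^{1/m}$, i.e.\ $(f^m)'(\xi_0)\neq0$. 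So the mechanism is not a loss of eigenvalue signs at the interface equilibria, as you conjecture, but their outright non-existence; and it is the compactified phase portrait (every orbit must converge to one of the surviving critical points) that disposes of non-power-law or oscillatory vanishing --- the step you correctly identify as the main obstacle but leave as a sketch. Your approach buys a transparent, self-contained explanation of why $m+p=2$ is the critical threshold; the paper's buys the stronger conclusion that all profiles actually change sign at a finite point, not merely that no interface asymptotics is self-consistent. Note also that your auxiliary integral ``sanity check'' does not close the gap on its own, since deciding whether $\beta\xi^{N}f(\xi)$ or $\int_\xi^{\xi_0}s^{N-1+\sigma}f^p\,ds$ dominates again requires a priori knowledge of the vanishing rate of $f$.
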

We stress here that the sign of the expression $m+p-2$ produces a big difference in the analysis of both the ordinary differential equation Eq. \eqref{ODE} and of Eq. \eqref{eq1} from a qualitative point of view. The influence of the sign of $m+p-2$ in the behavior of reaction-diffusion equations with $p\in(0,1)$ has been observed for the first time by de Pablo and V\'azquez in \cite{dPV90} for the non-weighted reaction-diffusion equation in dimension $N=1$ and the explanation for such a huge influence on the properties of the equation is that, on the one hand, if $m+p\geq2$ the equation presents \emph{finite speed of propagation} of supports, while on the other hand for $m+p<2$ \emph{infinite speed of propagation} holds true. The latter means that, even if the initial condition $u_0$ is compactly supported, any solution becomes strictly positive at any $t>0$. This implies that interfaces cannot exist if $m+p<2$. The same dichotomy has been noticed for $\sigma>2(1-p)/(m-1)$ in \cite{IS20b}.

\medskip

\noindent \textbf{Organization of the paper}. The technique used for the proofs is a phase-plane analysis, related to an autonomous dynamical system associated to Eq. \eqref{ODE}. The dynamical system that we use in the proofs is very different from the ones we used (in three dimensions) in our recent papers \cite{IS20b, IS21b}, since that one is no longer satisfactory at a technical level. Thus, we will go back to the variables employed for the dynamical system in the older paper \cite{ISV08}. As we shall see, there are some technical differences between the cases $m+p>2$ and $m+p=2$ and also between the study of the dynamical system in dimension $N\geq3$ and, respectively, in dimensions $N=2$ and $N=1$. We will thus do our main study of the dynamical system taking as core of the analysis the range of exponents when $m+p>2$ and $N\geq3$. This analysis, leading to the proof of Theorem \ref{th.1} for this range of exponents and dimension, will be performed in the longest Section \ref{sec.large}, which is conveniently sub-divided into several subsections. It then follows a Section \ref{sec.equal} devoted to the case $m+p=2$ also in dimension $N\geq3$, emphasizing only on the differences with respect to the previous section and leading to the proof of Theorem \ref{th.1bis}. The specific dimensions $N=2$ and $N=1$, always with $m+p\geq2$, will be studied in a shorter Section \ref{sec.lowdim} where only the differences with respect to the previous proofs for $N\geq3$ will be given. The proof of Theorem \ref{th.2} will readily follow from a similar phase-plane analysis, by noticing that there is no possible interface behavior. This proof will be addressed at the same time for all dimensions in Section \ref{sec.lower}. The final Section \ref{sec.transf} will be dedicated to a transformation mapping Eq. \eqref{eq1} into a reaction-convection-diffusion equation in dimension $N=1$, and an application of our previous analysis leads to a \emph{classification of the traveling wave solutions} to this new equation. In particular, the uniqueness of the traveling wave of minimal speed is proved.

\section{Analysis of the range $m+p>2$ in dimension $N\geq3$}\label{sec.large}

This section is devoted to the main part of the proof of Theorem \ref{th.1}, that is, for the range of exponents when $m+p>2$ and in dimension $N\geq3$. As we shall see in Section \ref{sec.lowdim}, dimensions $N=2$ and $N=1$ introduce some technical differences despite the fact that the qualitative results are the same. This is why, we will present the main arguments of the proofs avoiding such technical exceptions and we deal with them lately. The idea of the proof is to transform the non-autonomous partial differential equation \eqref{ODE} into an autonomous dynamical system and then study the associated phase plane. In our recent papers devoted to the similar range of exponents $m>1$ and $p\in(0,1)$ we used a three-dimensional dynamical system, but in this precise case we can reduce it to a two-dimensional one.

\subsection{The dynamical system. Local analysis}\label{subsec.finite}

Retaking a change of variables used in \cite{ISV08}, we introduce the new variables
\begin{equation}\label{change}
X=\frac{\alpha}{2m}\xi^2f(\xi)^{1-m}, \qquad Y=\xi f'(\xi)f^{-1}(\xi), \qquad \frac{d}{d\eta}=m^2\xi\frac{d}{d\xi},
\end{equation}
to obtain after straightforward calculations the following autonomous system of differential equations
\begin{equation}\label{PPSyst}
\left\{\begin{array}{ll}\dot{X}=X(2-(m-1)Y),\\ \dot{Y}=-mY^2-(N-2)Y+2X-(m-1)XY-KX^{(m-p)/(m-1)},\end{array}\right.
\end{equation}
depending on the parameter
\begin{equation}\label{param}
K=\frac{1}{m}\left(\frac{2m}{\alpha}\right)^{(m-p)/(m-1)}.
\end{equation}
The dot notation in the system \eqref{PPSyst} means derivative with respect to the independent variable $\eta$ introduced in \eqref{change}. Let us notice here that $X\geq0$, $Y$ can change sign and that in our range of parameters,
$$
1<\frac{m-p}{m-1}<2, \qquad {\rm for \ any} \ m>1, \ p\in(0,1), \ {\rm such \ that} \ m+p>2,
$$
a fact of utmost importance at technical level in the subsequent analysis. We readily find that the system \eqref{PPSyst} has two critical points in the finite part of the phase plane, namely
$$
P_0=(0,0), \qquad P_1=\left(0,-\frac{N-2}{m}\right),
$$
whose local analysis is performed in the next two Lemmas below.
\begin{lemma}[Local analysis near $P_0$]\label{lem.P0}
The critical point $P_0$ is a saddle point. There is a unique orbit going out of it, which contains profiles such that $f(0)=A>0$, $f'(0)=0$.
\end{lemma}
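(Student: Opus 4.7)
The plan is to linearize the system \eqref{PPSyst} at $P_0=(0,0)$, invoke the stable manifold theorem to extract a unique outgoing orbit, and then match this orbit with profiles satisfying $f(0)=A>0$, $f'(0)=0$ via a local expansion of \eqref{ODE} around $\xi=0$.

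First I would compute the Jacobian of the vector field at $P_0$. The crucial observation is that, since $0<p<1$, the exponent $(m-p)/(m-1)$ is strictly greater than $1$, so the term $KX^{(m-p)/(m-1)}$ and its $X$-derivative both vanish at $X=0$. The linearization therefore reduces to
\begin{equation*}
J(P_0)=\begin{pmatrix} 2 & 0 \\ 2 & -(N-2) \end{pmatrix},
\end{equation*}
with eigenvalues $\lambda_1=2$ and $\lambda_2=-(N-2)$. Under the running assumption $N\geq3$, these eigenvalues have opposite signs, so $P_0$ is a hyperbolic saddle. The eigenvector associated with $\lambda_1=2$ is proportional to $(N,2)$, hence the one-dimensional unstable manifold is tangent to the line $Y=(2/N)X$, while the stable direction is the $Y$-axis, which is invariant under the flow since $\dot{X}=X(2-(m-1)Y)$. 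The stable manifold theorem then yields a unique branch of the unstable manifold contained in the half-plane $X>0$, giving exactly one trajectory $\gamma$ leaving $P_0$ in the physically relevant region.

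Finally I would identify $\gamma$ with the orbits coming from profiles having $f(0)=A>0$, $f'(0)=0$. Plugging the ansatz $f(\xi)=A+a_2\xi^2+O(\xi^4)$ into \eqref{ODE} and matching leading-order terms as $\xi\to0^{+}$ (the weighted reaction $\xi^\sigma f^p$ being lower order because $\sigma>0$), one obtains $a_2=\alpha A^{2-m}/(2Nm)$. Substituting into \eqref{change} produces
\begin{equation*}
X(\xi)\sim\frac{\alpha A^{1-m}}{2m}\,\xi^2,\qquad Y(\xi)\sim\frac{\alpha A^{1-m}}{Nm}\,\xi^2,\qquad \xi\to0^{+},
\end{equation*}
so $Y/X\to 2/N$, matching exactly the unstable eigendirection. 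Conversely, the orbit $\gamma$ can be parameterized by $\xi$ through the change of variables \eqref{change}, and integrating back recovers a profile with $f(0)>0$ and $f'(0)=0$; the freedom in the value of $A$ corresponds to a shift of the independent variable $\eta$ along $\gamma$, in agreement with the rescaling \eqref{resc}.

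The main obstacle I anticipate is verifying rigorously that the formal Taylor expansion at $\xi=0$ produces an actual solution realizing the unstable manifold (as opposed to a merely formal asymptotic). This is essentially a local Cauchy problem at the regular point $\xi=0$ for \eqref{ODE}, and since $f(0)=A>0$ keeps the equation locally nondegenerate, standard ODE theory applies; the bijection between such Cauchy data and the one-dimensional unstable manifold then follows from the uniqueness statement in the stable manifold theorem. This also hints that the analysis will be more delicate in the borderline case $m+p=2$ of Section \ref{sec.equal}, where $(m-p)/(m-1)=2$ and the nonlinear term contributes already to the linearization at $P_0$.
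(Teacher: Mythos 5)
Your proof is correct, and the core of it (the Jacobian $\bigl(\begin{smallmatrix}2&0\\2&2-N\end{smallmatrix}\bigr)$, the eigenvalues $2$ and $2-N$, the saddle structure for $N\geq3$, and the unstable eigendirection $Y\sim(2/N)X$) coincides exactly with the paper's. Where you differ is in how the unique outgoing orbit is identified with the profiles: the paper goes from the orbit to the profile, integrating the tangency relation $Y/X\sim 2/N$ to obtain $f(\xi)^{m-1}\sim D+\frac{\alpha(m-1)}{2mN}\xi^2$ as in \eqref{interm1}, and must then \emph{exclude} the possibilities that this asymptotic is attained as $\xi\to\xi_0\in(0,\infty)$ or $\xi\to\infty$ (using $X\to0$ and $Y\to0$ on the orbit) before concluding it holds as $\xi\to0$. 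You instead go from the profile to the orbit: you solve the Cauchy problem $f(0)=A>0$, $f'(0)=0$ for \eqref{ODE}, compute $a_2=\alpha A^{2-m}/(2Nm)$, verify via \eqref{change} that the resulting trajectory approaches $P_0$ tangent to the unstable eigenvector, and invoke uniqueness of the unstable branch in $\{X>0\}$. This spares you the exclusion step, at the price of having to justify local well-posedness of the mildly singular radial Cauchy problem at $\xi=0$ (the $\frac{N-1}{\xi}(f^m)'$ term), which you correctly flag as standard since $f(0)=A>0$ keeps the diffusion nondegenerate; your observation that varying $A$ amounts to an $\eta$-translation along the same orbit, consistent with \eqref{resc}, closes the loop properly. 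One small correction to your final aside: when $m+p=2$ one has $(m-p)/(m-1)=2$, so the reaction term becomes $KX^{2}$, which is still superlinear and does \emph{not} contribute to the linearization at $P_0$; the paper indeed states that the analysis of $P_0$ and $P_1$ is unchanged in that case, the genuine differences appearing only at the critical points at infinity.
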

\begin{proof}
The linearization of the system \eqref{PPSyst} near $P_0$ has the matrix
$$M(P_0)=\left(
  \begin{array}{cc}
    2 & 0 \\
    2 & 2-N \\
  \end{array}
\right),$$
thus $P_0$ is a saddle point with eigenvalues $\lambda_1=2$, $\lambda_2=2-N<0$, recalling that we work in dimension $N\geq3$. We are interested in the unique orbit going out of it, which is tangent to the eigenvector $e_1=(1,2/N)$ corresponding to the eigenvalue $\lambda_1$ of the matrix $M(P_0)$. With respect to the local behavior, the profiles contained in this orbit satisfy $X/Y\sim N/2$, which by substitution in terms of profiles and direct integration leads to
\begin{equation}\label{interm1}
f(\xi)\sim\left(D+\frac{\alpha(m-1)}{2mN}\xi^2\right)^{1/(m-1)}, \qquad D>0 \ {\rm free \ constant}.
\end{equation}
The previous local approximation may only take place as $\xi\to0$. Indeed, if \eqref{interm1} would take place as $\xi\to\xi_0\in(0,\infty)$ or as $\xi\to\infty$, recalling that $X\to0$ near $P_0$ we infer that $f^{1-m}(\xi)\sim0$, which contradicts \eqref{interm1} if $\xi\to\xi_0\in(0,\infty)$. On the other hand, the possibility that $\xi\to\infty$ is ruled out by the fact that $Y\to0$. Indeed, if \eqref{interm1} holds true as $\xi\to\infty$ we find that $Y(\xi)\sim 1/(m-1)$ as $\xi\to\infty$ and a contradiction. Thus, \eqref{interm1} can only hold true as $\xi\to0$, and we readily get that $f'(0)=0$ and $f(0)=D^{1/(m-1)}>0$.
\end{proof}
\begin{lemma}[Local analysis near $P_1$]\label{lem.P1}
The critical point $P_1$ is an unstable node. The orbits going out of it contain profiles such that present a vertical asymptote at $\xi=0$, more precisely
\begin{equation}\label{beh.P1}
f(\xi)\sim D\xi^{-(N-2)/m}, \qquad {\rm as} \ \xi\to0, \ D>0 \ {\rm free \ constant}.
\end{equation}
\end{lemma}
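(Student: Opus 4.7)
The plan is to follow the schema of Lemma \ref{lem.P0} for the other critical point: linearize \eqref{PPSyst} at $P_1$, read off the eigenvalues, and then translate the local picture back to the profile $f$ via the change of variables \eqref{change}. First, I compute the Jacobian at $P_1=(0,-(N-2)/m)$. The key observation is that, thanks to the hypothesis $m+p>2$, the exponent $(m-p)/(m-1)$ is strictly greater than $1$, so the nonlinear term $-KX^{(m-p)/(m-1)}$ does not contribute to the linearization at $X=0$. A short calculation yields
\[
M(P_1)=\begin{pmatrix} 2+\dfrac{(m-1)(N-2)}{m} & 0 \\[4pt] 2+\dfrac{(m-1)(N-2)}{m} & N-2 \end{pmatrix},
\]
a lower-triangular matrix whose eigenvalues $\lambda_1=2+(m-1)(N-2)/m$ and $\lambda_2=N-2$ are both strictly positive for $N\geq 3$ and $m>1$. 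Hence $P_1$ is an unstable node, and a two-parameter family of orbits emanates from it.

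Next, I read off the profile behavior. Along any orbit emanating from $P_1$ one has $Y(\xi)\to -(N-2)/m$ in the corresponding limit of $\xi$, and since $Y=\xi f'(\xi)/f(\xi)=\xi(\ln f)'(\xi)$ this amounts to the asymptotic relation $(\ln f)'(\xi)\sim -(N-2)/(m\xi)$. A direct integration gives the candidate expansion
\[
f(\xi)\sim D\,\xi^{-(N-2)/m},\qquad D>0 \text{ free constant}.
\]

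Finally, I must identify the direction of the limit, i.e.\ rule out that this asymptotic occurs at some finite $\xi_0>0$ or as $\xi\to\infty$. Plugging the candidate into the definition of $X$ gives
\[
X(\xi)\sim \frac{\alpha}{2m}\,D^{1-m}\,\xi^{\,2+(N-2)(m-1)/m},
\]
whose exponent is strictly positive when $m>1$ and $N\geq 3$. The requirement $X\to 0$ at $P_1$ then forces $\xi\to 0$: a limit at some finite $\xi_0\in(0,\infty)$ would leave $X$ bounded away from zero, while $\xi\to\infty$ would send $X$ to infinity. This yields the vertical asymptote \eqref{beh.P1} and concludes the proof. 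As in Lemma \ref{lem.P0}, the only genuinely subtle step is this last one: the linear analysis by itself only gives the leading order of $Y$, and locating the asymptotic in the original variable $\xi$ requires consistency with the algebraic definition of $X$ in \eqref{change}.
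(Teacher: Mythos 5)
Your proof is correct and follows essentially the same route as the paper: the same Jacobian (your $2+(m-1)(N-2)/m$ equals the paper's $(mN-N+2)/m$), the same integration of $Y\sim-(N-2)/m$, and the same use of $X\to0$ to force $\xi\to0$. Two harmless slips: the fact that $(m-p)/(m-1)>1$ comes from $p<1$ alone (the hypothesis $m+p>2$ only gives the upper bound $2$), and an unstable node in the plane emits a one-parameter, not two-parameter, family of orbits.
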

\begin{proof}
The linearization of the system \eqref{PPSyst} near $P_1$ has the matrix
$$M(P_1)=\left(
  \begin{array}{cc}
    \frac{mN-N+2}{m} & 0 \\
    \frac{mN-N+2}{m} & N-2 \\
  \end{array}
\right),$$
with eigenvalues $\lambda_1=(mN-N+2)/m$, $\lambda_2=N-2>0$, thus it is an unstable node. The orbits going out of $P_1$ are such that $Y\sim-(N-2)/m$ and $X\to0$, which after substitution in terms of profiles give
\begin{equation}\label{interm2}
\frac{f'(\xi)}{f(\xi)}\sim-\frac{N-2}{m\xi},
\end{equation}
leading by integration to \eqref{beh.P1}. Assume for contradiction that \eqref{interm2} holds true as $\xi\to\xi_0\in(0,\infty)$ or as $\xi\to\infty$. In both cases we obtain that $f(\xi)\to\infty$ from the fact that $X=0$ at $P_1$, which contradicts \eqref{beh.P1}. It thus remains that both \eqref{interm2} and \eqref{beh.P1} hold true as $\xi\to0$, as claimed.
\end{proof}
With this local analysis in mind, the good profiles are all contained in the unique orbit going out of $P_0$, which becomes our main object of study.

\subsection{Analysis at the infinity of the phase plane}\label{subsec.infinity}

The local analysis of the phase plane of the system \eqref{PPSyst} has to be completed with the analysis of the critical points at infinity. To this end, we pass to the Poincar\'e sphere following \cite[Section 3.10]{Pe}. We introduce new variables
$$
X=\frac{\overline{X}}{W}, \qquad Y=\frac{\overline{Y}}{W}
$$
and it follows that the critical points at infinity lie on the equator of the Poincar\'e sphere, that is, at points $(\overline{X},\overline{Y},W)$ such that $\overline{X}^2+\overline{Y}^2=1$ and $W=0$. If we denote by $P(X,Y)$ and $Q(X,Y)$ the right hand sides of the two equations of the system \eqref{PPSyst}, we notice that the highest order of both $P(X,Y)$ and $Q(X,Y)$ is quadratic, since $(m-p)/(m-1)<2$. Thus, following the theory in \cite[Section 3.10]{Pe}, we let
$$
P^*(\overline{X},\overline{Y},W)=W^2P\left(\frac{\overline{X}}{W},\frac{\overline{Y}}{W}\right), \qquad Q^*(\overline{X},\overline{Y},W)=W^2Q\left(\frac{\overline{X}}{W},\frac{\overline{Y}}{W}\right)
$$
and the critical points at infinity are obtained as solutions to the following equation
$$
\overline{X}Q^*(\overline{X},\overline{Y},W)-\overline{Y}P^*(\overline{X},\overline{Y},W)=0.
$$
We find after straightforward calculations that
\begin{equation}\label{interm0}
\begin{split}
\overline{X}Q^*(\overline{X},\overline{Y},W)&-\overline{Y}P^*(\overline{X},\overline{Y},W)=-m\overline{X}\overline{Y}^2-(N-2)\overline{X}\overline{Y}W+2\overline{X^2}W\\
&-(m-1)\overline{X}^2\overline{Y}-K\overline{X}^{(2m-p-1)/(m-1)}W^{(m+p-2)/(m-1)}\\&-2\overline{X}\overline{Y}W+(m-1)\overline{X}\overline{Y}^2=0.
\end{split}
\end{equation}
Evaluating the term in \eqref{interm0} at $W=0$ and recalling that $m+p-2>0$ we obtain that
$$
\overline{X}\overline{Y}(\overline{Y}+(m-1)\overline{X})=0,
$$
leading thus (together with the fact that $\overline{X}^2+\overline{Y}^2=1$) to the following four critical points at infinity
$$
Q_1=(1,0,0), \ Q_2=(0,1,0), \ Q_3=(0,-1,0), \ Q_4=\left(\frac{1}{\sqrt{1+(m-1)^2}},-\frac{m-1}{\sqrt{1+(m-1)^2}},0\right).
$$
We perform the local analysis of these points one by one below.
\begin{lemma}[Local analysis near $Q_1$]\label{lem.Q1}
The critical point $Q_1$ behaves like a stable node at infinity. The orbits entering it contain all the profiles with interface of Type II, that is, with a local behavior given by \eqref{TypeII}.
\end{lemma}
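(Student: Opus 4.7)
The plan is to desingularize $Q_1$ by passing to the Poincar\'e chart $y = Y/X$, $z = 1/X$, which sends $Q_1$ to the origin, together with the time rescaling $d\tau = z^{-1}\,d\eta$ chosen to cancel the singularities introduced by the chart. A direct substitution into \eqref{PPSyst} then yields the regular system
\begin{equation*}
\dot y = -(m-1)y - y^{2} + 2z - Nyz - K z^{\gamma}, \qquad \dot z = (m-1)yz - 2z^{2},
\end{equation*}
with $\gamma := (m+p-2)/(m-1)$. In the present range $m+p>2$ and $0<p<1$ one has $0<\gamma<1$, and this two-sided control is the key: $\gamma>0$ ensures that the vector field extends continuously to $z=0$, while $\gamma<1$ makes $K z^{\gamma}$ the dominant $z$-dependent forcing near the origin.

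Next I analyze the origin of the new system. The linear part has eigenvalues $-(m-1)<0$ along $y$ and $0$ along $z$, so I look for a one-dimensional invariant curve $y = h(z)$ with $h(0)=0$. Balancing the two dominant contributions $-(m-1)y$ and $-K z^{\gamma}$ in $\dot y = h'(z)\dot z$, and using that $h'(z)\dot z = O(z^{2\gamma})$ is of higher order because $\gamma<1$, gives the leading asymptotic
\begin{equation*}
h(z) = -\frac{K}{m-1}\,z^{\gamma} + o(z^{\gamma}).
\end{equation*}
Setting $w := y - h(z)$, a direct calculation shows $\dot w = -(m-1)w + O(z^{\min\{2\gamma,1\}})$, and a standard Gronwall-type differential inequality then traps $w$ inside a tube of size $z^{\min\{2\gamma,1\}}$ about the curve $y = h(z)$; this produces the invariant manifold without appealing to a smooth center manifold theorem. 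Plugging $y = h(z)$ into the $z$-equation gives the reduced dynamics $\dot z \sim -K z^{\gamma+1}$, which drives $z\to 0^{+}$ as $\tau\to+\infty$, and combined with the exponential contraction in the transversal stable direction this shows that every orbit in the physical half-plane $z>0$ near the origin converges to $Q_1$. This is exactly the "stable node at infinity" claim.

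To conclude, I translate the asymptotics back to profiles. Any orbit entering $Q_1$ eventually lies in the above tube, so $Y/X \sim -(K/(m-1)) X^{-\gamma}$, hence $Y \sim -(K/(m-1)) X^{1-\gamma}$. Using $X = (\alpha/2m)\xi^{2}f^{1-m}$, $Y = \xi f'/f$, and the identity $(1-m)(1-\gamma) = -(1-p)$, this reduces to the asymptotic first-order ODE $f' \sim -c_0 f^{p}$ near the contact point, whose elementary integration yields $f(\xi) \sim C(\xi_0 - \xi)^{1/(1-p)}$, which is \eqref{TypeII}. Finally, that the interface occurs at a finite $\xi_0$ and not at $\xi\to\infty$ follows from $z \sim C\tau^{-1/\gamma}$ on the center manifold together with $d\eta/d\tau = z$ and $1/\gamma>1$, which force $\eta(\tau)$ to remain bounded, and then via $d\eta = m^{2}\xi\,d\xi$ so does $\xi$.

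The principal obstacle is the non-smoothness of the vector field at $z=0$: the $z^{\gamma}$ term with $\gamma\in(0,1)$ makes one entry of the formal Jacobian singular, so the classical $C^{1}$ center manifold theorem does not apply directly. The ansatz-and-comparison argument sketched above replaces the abstract manifold construction by a hands-on ODE estimate and is the cleanest way I see to cope with the H\"older regularity of the forcing; positivity of $K$ (which follows from $\alpha>0$) is what makes the reduced dynamics on the invariant curve attracting and is essential for the argument.
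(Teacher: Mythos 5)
Your proposal is correct and follows essentially the same route as the paper: the same chart $y=Y/X$, $z=1/X$ leading to the system \eqref{PPSyst2}, the same identification of a stable direction with eigenvalue $-(m-1)$ and a degenerate (center-type) direction, the same leading relation $y\sim -\tfrac{K}{m-1}z^{(m+p-2)/(m-1)}$ (equivalently \eqref{interm.new}), and the same integration back to the Type II behavior \eqref{TypeII}. The only substantive difference is how the H\"older term $Kz^{\gamma}$ with $\gamma\in(0,1)$ is tamed: the paper substitutes $w=z^{\gamma}$, which renders the system $C^{1}$ at the origin and allows a direct appeal to center manifold theory, whereas you keep the original variables and replace the manifold theorem by an ansatz-plus-Gronwall trapping estimate; both are legitimate, and yours is more self-contained at the price of more explicit bookkeeping. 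One small slip: from \eqref{change} one has $d\eta=d\xi/(m^{2}\xi)$, not $d\eta=m^{2}\xi\,d\xi$; with the correct relation your boundedness-of-$\eta$ argument in fact gives the stronger (and needed) conclusion that $\xi$ stays in a compact subinterval of $(0,\infty)$, so $\xi_{0}$ is finite and positive.
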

\begin{proof}
The flow in a neighborhood of the critical point $Q_1$ is topologically equivalent, according to \cite[Theorem 2, Section 3.10]{Pe} to the flow near the origin for the system
\begin{equation}\label{PPSyst2}
\left\{\begin{array}{ll}\dot{y}=2z-(m-1)y-Nyz-y^2-Kz^{(m+p-2)/(m-1)}, \\ \dot{z}=(m-1)yz-2z^2\end{array}\right.
\end{equation}
Due to the fact that $m+p-2<m-1$, we need to perform a further change of variable and set $w=z^{(m+p-2)/(m-1)}$ to get a new system
\begin{equation}\label{PPSyst2bis}
\left\{\begin{array}{ll}\dot{y}=2w^{(m-1)/(m+p-2)}-(m-1)y-Nyw^{(m-1)/(m+p-2)}-y^2-Kw, \\ \dot{w}=(m+p-2)yw-\frac{2(m+p-2)}{m-1}w^{1+(m-1)/(m+p-2)},\end{array}\right.
\end{equation}
where we can linearize near the origin. This linearization has the matrix
$$M(Q_1)=\left(
  \begin{array}{cc}
    -(m-1) & -K \\
    0 & 0 \\
  \end{array}
\right),$$
with eigenvalues $\lambda_1=-(m-1)<0$ and $\lambda_2=0$. We thus have a stable manifold and center manifolds (that may not be unique). The analysis of the center manifold (following \cite[Section 2.12]{Pe}) readily gives that the equation of any center manifold of the system \eqref{PPSyst2bis} in a neighborhood of the point $(y,w)=(0,0)$ is (in a first order approximation)
$$
-(m-1)y-Kw=o(|(y,w)|), \qquad {\rm that \ is } \ \qquad y(w)=-\frac{K}{m-1}w+o(w)
$$
and the flow on the center manifold is given by
$$
\dot{w}=-\frac{K(m+p-2)}{m-1}w^2+o(w),
$$
hence the orbits that are tangent to any center manifold enter the critical point $Q_1$. Thus, the critical point $Q_1$ behaves as a stable node. The orbits entering $Q_1$ tangent to its center manifolds contain profiles such that $y/w\sim-K/(m-1)$, hence
\begin{equation}\label{interm3}
y\sim-\frac{K}{m-1}z^{(m+p-2)/(m-1)}.
\end{equation}
We go back to our initial variables $(X,Y)$ taking into account that $y=Y/X$ and $z=1/X$, thus \eqref{interm3} leads to
\begin{equation}\label{interm.new}
Y\sim-\frac{K}{m-1}X^{(1-p)/(m-1)},
\end{equation}
which after a substitution by the formulas in \eqref{change} gives
\begin{equation*}
(f^{1-p})'(\xi)\sim-\frac{K(1-p)}{m-1}\xi^{(3-2p-m)/(m-1)}
\end{equation*}
and after integration
\begin{equation}\label{interm4}
f(\xi)^{1-p}\sim D-\frac{K}{2}\xi^{2(1-p)/(m-1)}.
\end{equation}
Assume now for contradiction that \eqref{interm4} holds true as $\xi\to0$. In this case, since $X\to\infty$, we readily infer from the definition of $X$ in \eqref{change} that $f(\xi)\to0$, leading to $D=0$ and a contradiction with the required positivity of the profiles $f(\xi)$ near $\xi=0$. It is obvious that \eqref{interm4} cannot hold true as $\xi\to\infty$, hence we are left with \eqref{interm4} as $\xi\to\xi_0\in(0,\infty)$. This easily leads to the behavior of interface of Type II given in \eqref{TypeII}.
\end{proof}
\begin{lemma}[Local analysis near $Q_2$ and $Q_3$]\label{lem.Q23}
The critical points $Q_2$ and $Q_3$ are, respectively, an unstable node and a stable node. The orbits entering them contain profiles with a change of sign at some finite point $\xi_0\in(0,\infty)$ with the following behavior:

$\bullet$ the profiles contained in the orbits going out of $Q_2$ change sign from negative to positive at $\xi_0\in[0,\infty)$ with behavior
$$
f(\xi)\sim C(\xi-\xi_0)^{1/m}, \qquad {\rm as} \ \xi\to\xi_0, \ \xi>\xi_0.
$$

$\bullet$ the profiles contained in the orbits entering $Q_3$ change sign from positive to negative at $\xi_0\in(0,\infty)$ with behavior
$$
f(\xi)\sim C(\xi_0-\xi)^{1/m}, \qquad {\rm as} \ \xi\to\xi_0, \ \xi<\xi_0.
$$
\end{lemma}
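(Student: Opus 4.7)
The plan is to apply to $Q_2$ and $Q_3$ the same Poincar\'e-sphere analysis that was used for $Q_1$ in Lemma~\ref{lem.Q1}, but now working in the chart that resolves the $Y$-axis at infinity. For $Q_2=(0,1,0)$ I will set $X=x/z$, $Y=1/z$ with $z>0$, and for $Q_3=(0,-1,0)$, $X=v/w$, $Y=-1/w$ with $w>0$. After substituting these relations in \eqref{PPSyst} and rescaling the independent variable by dividing $d\eta$ by $z$ (respectively by $w$), so that the origin becomes a genuine critical point, one obtains a regular autonomous system in each chart. The crucial simplification with respect to $Q_1$ is that the reaction monomial $KX^{(m-p)/(m-1)}$ produces in both new systems a term whose exponents in $(x,z)$ (respectively $(v,w)$) are one strictly greater than $1$ and the other strictly positive, precisely thanks to the standing assumption $m+p>2$; hence it is genuinely higher order and does not contribute to the linearization.

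A direct computation of the Jacobians at the origin then gives the diagonal matrices $\mathrm{diag}(1,m)$ at $Q_2$ and $\mathrm{diag}(-1,-m)$ at $Q_3$. Both spectra are simple, and the two eigenvalues within each spectrum share a common sign, so $Q_2$ is an unstable node and $Q_3$ is a stable node (the time rescalings are by positive quantities, hence orientation preserving). Since $m>1$, the weaker eigenvalue in each case is associated to the horizontal axis, so generic orbits leaving $Q_2$ (resp.\ entering $Q_3$) are tangent to the $x$-axis (resp.\ to the $v$-axis), while a single exceptional orbit along the vertical axis realizes the strong unstable/stable direction. Integrating the linearized system along such a generic orbit produces the asymptotic law $z\sim c\,x^m$ at $Q_2$ and analogously $w\sim c\,v^m$ at $Q_3$.

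It then remains to translate these relations into profile asymptotics via~\eqref{change}. Rewriting $z\sim c\,x^m$ in the original variables gives $|Y|^{m-1}\sim c^{-1}X^m$, and substituting the formulas for $X$ and $Y$ reduces this to an ordinary differential relation of the form $f^{m-1}f'\sim A\,\xi^{(m+1)/(m-1)}$. An elementary integration yields $f^m(\xi)\sim A'\bigl(\xi^{2m/(m-1)}-\xi_0^{2m/(m-1)}\bigr)$, and a first-order Taylor expansion around $\xi_0$ produces the announced interface behavior $f(\xi)\sim C(\xi-\xi_0)^{1/m}$ (respectively $C(\xi_0-\xi)^{1/m}$). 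The side of $\xi_0$ on which the asymptotic is realized, and the sign of $f$ on either side, are dictated by tracking the signs of $X$ and $Y$ along the orbit: at $Q_2$ one has $Y\to+\infty$ with $X\to+\infty$, forcing $f'>0$ and hence a sign change from negative to positive across $\xi_0$; at $Q_3$, the condition $Y\to-\infty$ with $X\to+\infty$ enforces the opposite sign change. The possibilities $\xi_0=0$ and $\xi_0=\infty$ are excluded by an inconsistency argument analogous to the ones used in the proofs of Lemmas~\ref{lem.P0} and~\ref{lem.P1}.

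The main technical delicacy will be the careful sign bookkeeping in the $Q_3$ chart, where $Y<0$, together with the verification that the time rescaling, being by a strictly positive factor, does not accidentally invert the stability classification; the algebra of writing down the transformed systems and of reading off the linearization is otherwise routine.
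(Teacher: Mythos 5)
Your proposal follows essentially the same route as the paper: the same Poincar\'e chart $x=X/Y$, $z=1/Y$ (your $Q_3$ chart is just its sign-reflected version, which is how the paper handles the flow orientation via the choice of $\pm$ in its transformed system \eqref{PPSyst3}), the same linearization with spectra $\{1,m\}$ and $\{-1,-m\}$ after observing that the $K$-terms are of higher order thanks to $m+p>2$, and the same integration $z\sim c\,x^m$ leading to $f^{m-1}f'\sim C\xi^{(m+1)/(m-1)}$ and hence $f^m(\xi)\sim C_2+C_1\xi^{2m/(m-1)}$.

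The one step you should revisit is the last one: you cannot exclude $\xi_0=0$ for the orbits going out of $Q_2$, and indeed the statement you are proving explicitly allows $\xi_0\in[0,\infty)$ there (closed at $0$), in contrast with $\xi_0\in(0,\infty)$ for $Q_3$. Along the $Q_2$ orbits the constant $C$ in $X^{m/(m-1)}\sim CY$ is positive; if the asymptotics were realized as $\xi\to0$, then $X\to\infty$ forces $f(\xi)\to0$, i.e.\ $C_2=0$ --- this is not a contradiction but instead singles out exactly one orbit whose profile changes sign at $\xi_0=0$ with $f(\xi)\sim C\xi^{1/m}$. (For $Q_3$ one has $C<0$, hence $C_1<0$, and then $\xi\to0$ and $\xi\to\infty$ are genuinely impossible, as you say.) Relatedly, ruling out $\xi\to\infty$ for the $Q_2$ orbits is not a generic ``inconsistency'' of the type used near $P_0$ and $P_1$: it requires substituting $f(\xi)\sim C\xi^{2/(m-1)}$ back into the definition of $X$ in \eqref{change} and checking that $X(\xi)$ then fails to tend to $+\infty$, contradicting the fact that $X\to\infty$ along orbits approaching $Q_2$ or $Q_3$. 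With these two adjustments your argument matches the paper's proof.
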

\begin{proof}
We deduce from \cite[Theorem 2, Section 3.10]{Pe} that the flow of the system \eqref{PPSyst} in a neighborhood of the critical points $Q_2$ and $Q_3$ is topologically equivalent with the flow in a neighborhood of the origin of the following system
\begin{equation}\label{PPSyst3}
\left\{\begin{array}{ll}\pm\dot{x}=-x-Nxz+2x^2z-(m-1)x^2-Kx^{1+(m-p)/(m-1)}z^{(m+p-2)/(m-1)}\\ \pm\dot{z}=-mz-(N-2)z^2+2xz^2-(m-1)xz-Kx^{(m-p)/(m-1)}z^{1+(m+p-2)/(m-1)},\end{array}\right.
\end{equation}
where the new variables are expressed in terms of the initial variables by $x=X/Y$, $z=1/Y$. The choice of the signs plus or minus in the system \eqref{PPSyst3} is determined by the direction of the flow in a neighborhood of the critical points. Since $Q_2=(0,1,0)$ is the point where $Y\to\infty$ and $Y/X\to\infty$ and $Q_3=(0,-1,0)$ is the point with $Y\to-\infty$ and $Y/X\to-\infty$, we readily get from the second equation of the system \eqref{PPSyst} that $\dot{Y}<0$ in a neighborhood of both $Q_2$ and $Q_3$, thus the minus sign corresponds to $Q_2$ and the plus sign corresponds to $Q_3$ since the direction of the flow near both points is always from right to left. Noticing at a formal level that the two terms with fractional powers in \eqref{PPSyst3} are of lower orders near the origin than the linear ones and neglecting them, we find that the linearization of the system \eqref{PPSyst3} in a neighborhood of the origin has eigenvalues 1 and $m$ if the minus sign is taken, thus $Q_2$ is an unstable node and its opposite $Q_3$ is a stable node. A rigorous approach for this local analysis is done straightforwardly by introducing the change of variable $w=z^{(m+p-2)/(m-1)}$ and analyze the newly obtained system, we do not enter into details as it is rather similar to the analysis done for the point $Q_1$ in coordinates $(y,w)$. Finally, in order to derive the behavior of the profiles contained in the orbits near these points, we infer from \eqref{PPSyst3} that $dx/dz\sim x/mz$ or equivalently after one step of integration
\begin{equation}\label{interm5}
X^{m/(m-1)}\sim CY, \qquad C \ {\rm free \ constant}
\end{equation}
which after a substitution by \eqref{change} leads to
$$
f^{m-1}(\xi)f'(\xi)\sim C\xi^{(m+1)/(m-1)}
$$
or equivalently after integration
\begin{equation}\label{interm6}
f^m(\xi)\sim C_2+C_1\xi^{2m/(m-1)}, \qquad C_1=\frac{m-1}{2C}, \ C_2 \ {\rm free \ constant},
\end{equation}
where $C$ is the constant in \eqref{interm5}. Assume now for contradiction that \eqref{interm5} and \eqref{interm6} hold true as $\xi\to0$ or as $\xi\to\infty$. Since we analyze the neighborhood of points where $Y\to\infty$ or $Y\to-\infty$, we infer from \eqref{interm5} that also $X\to\infty$ and that the sign of the free constant $C$ in \eqref{interm5} is fixed since $X>0$: $C>0$ if we are in a neighborhood of $Q_2$ and $C<0$ if we are in a neighborhood of $Q_3$. In the former case, if \eqref{interm6} holds true as $\xi\to0$, we get on the one hand from \eqref{interm6} that $f(\xi)\sim C_2$ as $\xi\to0$, but on the other hand $X\to+\infty$ readily leads to $f(\xi)\to0$ as $\xi\to0$, that is $C_2=0$ and we obtain a single connection with a change of sign at $\xi=0$ and behavior $f(\xi)\sim C_1\xi^{1/m}$ as $\xi\to0$, $C_1>0$. In the latter case, if \eqref{interm6} holds true as $\xi\to\infty$, we readily get from the positivity of $f(\xi)$ that necessarily $C_1>0$, thus $C>0$ in \eqref{interm5}. We next deduce from \eqref{interm6} that
$$
X(\xi)=\xi^2f(\xi)^{1-m}\sim\xi^2\xi^{-2m}=\xi^{2(1-m)}\to0, \ {\rm as} \ \xi\to\infty,
$$
and a contradiction to \eqref{interm5}. Thus, there are no connections satisfying \eqref{interm5} and \eqref{interm6} as $\xi\to\infty$. It follows that all the rest of the orbits satisfy \eqref{interm5} and \eqref{interm6} as $\xi\to\xi_0\in(0,\infty)$. For orbits entering $Q_3$, we have $C<0$ in \eqref{interm5}, thus $C_1<0$ in \eqref{interm6} and we are forced to restrict the analysis to $C_2>0$, thus readily obtaining the claimed behavior. For the orbits going out of $Q_2$ we can take a priori any constant $C_2$, since $C_1>0$ in this case, but since $X(\xi)\to\infty$ as $\xi\to\xi_0\in(0,\infty)$ we easily find that $C_2<0$ (in order for $f(\xi)$ to have a change of sign at $\xi=\xi_0$), as claimed.
\end{proof}
\begin{lemma}[Local analysis near $Q_4$]\label{lem.Q4}
The critical point at infinity $Q_4$ is a saddle point. The orbits entering it from the finite part of the phase plane contain all the profiles with interfaces of Type I, that is, with a local behavior given by \eqref{TypeI}.
\end{lemma}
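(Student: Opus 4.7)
The plan is to adapt the strategy of Lemma \ref{lem.Q1}, since $Q_4$ also lies on the equator of the Poincaré sphere with $\overline{X}\neq 0$, but with direction $Y/X\to-(m-1)$ instead of $Y/X\to 0$. I would therefore use the same local chart $y=Y/X$, $z=1/X$, giving back the system \eqref{PPSyst2}, and introduce as before the auxiliary variable $w=z^{(m+p-2)/(m-1)}$ (since $(m+p-2)/(m-1)<1$ in our range) to land on the $C^1$ system \eqref{PPSyst2bis}. In this picture $Q_4$ corresponds to $(y,w)=(-(m-1),0)$; the further translation $u=y+(m-1)$ places the critical point at the origin of the $(u,w)$-plane, which is the setting in which the linearization is to be performed.

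After the substitution $y=-(m-1)+u$, both the terms $(2+N(m-1))w^{(m-1)/(m+p-2)}$ and the quadratic cross-terms turn out to be of strictly higher order than $u$ and $w$ (since $(m-1)/(m+p-2)>1$ in our range), so the only surviving first-order contributions give an upper-triangular Jacobian with diagonal entries $m-1$ and $-(m-1)(m+p-2)$ and upper-right entry $-K$. Since $m+p>2$, these two eigenvalues have opposite signs, so $Q_4$ is a saddle point with a one-dimensional stable manifold; the corresponding stable eigenvector has nonzero components on both axes because of the nonzero off-diagonal $-K$.

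Along any orbit of the stable manifold entering $Q_4$ from the finite part of the phase plane we then have $Y/X\to-(m-1)$ together with $X\to\infty$; inserting \eqref{change} converts the first limit into
$$
(f^{m-1})'(\xi)\longrightarrow -\frac{\alpha(m-1)^2\xi_0}{2m},
$$
where $\xi_0$ is the (yet to be identified) limit value of $\xi$ along the orbit. The alternatives $\xi_0=\infty$ and $\xi_0=0$ are ruled out by the same kind of contradiction argument used in the previous lemmas: if $\xi\to\infty$ then the identity above reads $(f^{m-1})'(\xi)\sim -\alpha(m-1)^2\xi/(2m)$ which after integration forces $f^{m-1}$ to become negative for large $\xi$; if $\xi\to 0$ then either $X\to 0$ (incompatible with $X\to\infty$) or the same integration forces negativity again. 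Hence $\xi_0\in(0,\infty)$, and integrating $(f^{m-1})'$ from $\xi$ to $\xi_0$ yields
$$
f(\xi)\sim \left[\frac{\alpha(m-1)^2\xi_0}{2m}\right]^{1/(m-1)}(\xi_0-\xi)^{1/(m-1)},
$$
that is, precisely the Type I behavior \eqref{TypeI}. The main obstacle I anticipate is the careful bookkeeping in the linearization after the two successive changes of variable $w=z^{(m+p-2)/(m-1)}$ and $u=y+(m-1)$, and in particular verifying that the off-diagonal entry $-K$ is correctly extracted so that the stable manifold is not purely aligned with the $w$-axis (which would destroy the $Y/X\to -(m-1)$ identification used in the final step); once this is secured, the identification of the interface behavior is a routine integration.
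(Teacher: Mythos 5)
Your proposal is correct and follows essentially the same route as the paper: the same chart $(y,z)=(Y/X,1/X)$ giving \eqref{PPSyst2}, the same auxiliary variable $w=z^{(m+p-2)/(m-1)}$, the same Jacobian $\bigl(\begin{smallmatrix} m-1 & -K\\ 0 & -(m-1)(m+p-2)\end{smallmatrix}\bigr)$ with eigenvalues of opposite sign yielding the saddle, and the same integration of $Y/X\to-(m-1)$ together with the exclusion of $\xi_0=0$ (via $X\to\infty$ forcing the integration constant to vanish) and $\xi_0=\infty$ (via positivity) to arrive at \eqref{TypeI}. If anything, you are slightly more explicit than the paper, which presents this matrix as the linearization of \eqref{PPSyst2} even though it is really the Jacobian of the translated system \eqref{PPSyst2bis}, exactly the bookkeeping you flag as the delicate point.
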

\begin{proof}
With the same change of variables as in Lemma \ref{lem.Q1}, the flow of the system \eqref{PPSyst} in a neighborhood of $Q_4$ is topologically equivalent to the flow of the system \eqref{PPSyst2} in a neighborhood of the critical point $(y,z)=(-(m-1),0)$. The linearization of the system \eqref{PPSyst2} near the critical point $(y,z)=(-(m-1),0)$ has the matrix
$$M(Q_4)=\left(
  \begin{array}{cc}
    (m-1) & -K \\
    0 & -(m-1)(m+p-2)\\
  \end{array}
\right),$$
with eigenvalues $\lambda_1=(m-1)>0$ and $\lambda_2=-(m-1)(m+p-2)<0$, thus $Q_4$ is a saddle point. The orbit entering this point contains profiles whose behavior is given by $Y/X\sim-(m-1)$, whence by integration
\begin{equation}\label{interm7}
f^{m-1}(\xi)\sim C_3-\frac{(m-1)^2}{2}\xi^2, \qquad C_3>0 \ {\rm free \ constant}.
\end{equation}
Assume for contradiction that \eqref{interm7} holds true as either $\xi\to0$ or $\xi\to\infty$. In the former, since $X(\xi)\to\infty$ as $\xi\to0$ (as we are analyzing the point $Q_4$ whose orbits have $X\to\infty$ and $Y\to-\infty$), we obtain that $f(\xi)\to0$ as $\xi\to0$, thus $C_3=0$ and a contradiction with the positivity of $f(\xi)$. The latter is obviously impossible, as it contradicts the positivity of $f(\xi)$. Thus, \eqref{interm7} holds true as $\xi\to\xi_0$ for some finite $\xi_0\in(0,\infty)$ and we get the behavior of interface of Type I given by \eqref{TypeI}.
\end{proof}
We are now ready to pass to the global analysis of the connections in the phase plane associated to the system \eqref{PPSyst}.

\subsection{Global analysis with $K>0$ small}\label{subsec.small}

As it follows from the local analysis done in Subsections \ref{subsec.finite} and \ref{subsec.infinity}, we are interested in proving the existence and uniqueness of a connection between the saddle points $P_0$ and $Q_4$. The proof of its existence is based on a \emph{shooting method} in the system \eqref{PPSyst} with respect to the parameter $K$ defined in \eqref{param}. The first part of the shooting method, corresponding to $K>0$ sufficiently small, is performed in this section. We analyze the isoclines of the system \eqref{PPSyst}, that is, the curves where $\dot{X}=0$, respectively $\dot{Y}=0$. The former is just the horizontal line $Y=2/(m-1)$ together with the $Y$ axis. The latter is defined by
$$
-mY^2-[(N-2)+(m-1)X]Y+2X-KX^{(m-p)/(m-1)}=0,
$$
which defines two branches of $Y(X)$ as follows:
\begin{equation}\label{branches}
Y_1(X)=\frac{-(N-2)-(m-1)X+\sqrt{\Delta(X)}}{2m}, \ Y_2(X)=\frac{-(N-2)-(m-1)X-\sqrt{\Delta(X)}}{2m},
\end{equation}
where
\begin{equation}\label{delta}
\Delta(X)=(m-1)^2X^2+2(mN+2m-N+2)X+(N-2)^2-4KmX^{(m-p)/(m-1)}.
\end{equation}
The two branches intersect when $\Delta(X)=0$. We gather in the following rather long Lemma the properties of the two branches introduced in \eqref{branches} for $K>0$ sufficiently small.
\begin{lemma}\label{lem.small}
There exists $K_0>0$ sufficiently small such that for any $K\in(0,K_0)$, the following properties hold true:

(a) There is no intersection between the two branches $Y_1(X)$ and $Y_2(X)$ and $Y_1(X)<2/(m-1)$ for any $X\geq0$.

(b) There exists a unique $X_0(K)>0$ (depending on $K$) such that $Y_1(X)>0$ for $X\in(0,X_0)$ and $Y_1(X)<0$ for $X>X_0$.

(c) The branch $Y_1(X)$ is strictly decreasing with respect to $X$ for $X\in(X_0,\infty)$.
\end{lemma}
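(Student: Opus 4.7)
My plan is to establish the three claims in turn, exploiting the explicit formulas for the two branches; the main difficulty lies in the monotonicity in part (c).

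For part (a), I would start from the algebraic identity
$$\Delta_0(X) := \Delta(X)\bigl|_{K=0} = [(N-2)+(m-1)X]^2 + 8mX,$$
which follows from $2(mN+2m-N+2) - 2(m-1)(N-2) = 8m$. For $N \geq 3$ this yields $\Delta_0(X) \geq (N-2)^2 \geq 1$ on $[0,\infty)$; since the exponent $(m-p)/(m-1)$ lies in $(1,2)$, the quotient $4mX^{(m-p)/(m-1)}/\Delta_0(X)$ is continuous, non-negative, and tends to $0$ both as $X \to 0^+$ and as $X \to \infty$, so it attains a finite maximum $M_0$, and any choice $K_0 < 1/M_0$ forces $\Delta(X) > 0$ on $[0,\infty)$ for $K \in (0,K_0)$. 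For the bound $Y_1(X) < 2/(m-1)$, I would square the equivalent inequality $\sqrt{\Delta(X)} < \frac{4m}{m-1} + (N-2) + (m-1)X$; the same algebraic identity makes the cross terms cancel and what remains is $\frac{16m^2}{(m-1)^2} + \frac{8m(N-2)}{m-1} + 4KmX^{(m-p)/(m-1)} > 0$, valid unconditionally.

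For part (b), I would multiply $Y_1(X)$ by its conjugate and use the identity $\Delta(X) - [(N-2)+(m-1)X]^2 = 4mX\bigl(2 - KX^{(1-p)/(m-1)}\bigr)$ to obtain the compact formula
$$Y_1(X) = \frac{2X\bigl(2 - KX^{(1-p)/(m-1)}\bigr)}{\sqrt{\Delta(X)} + (N-2) + (m-1)X}.$$
The denominator is positive by (a), so $Y_1$ inherits the sign of $2 - KX^{(1-p)/(m-1)}$, which is positive on $[0,X_0)$, vanishes at the unique point $X_0 := (2/K)^{(m-1)/(1-p)}$, and is negative on $(X_0,\infty)$.

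Part (c) is the hardest. Implicit differentiation of $F(X,Y_1(X))=0$ with $F(X,Y) := -mY^2-[(N-2)+(m-1)X]Y+2X-KX^{(m-p)/(m-1)}$ gives, since $F_Y(X,Y_1) = -\sqrt{\Delta(X)} < 0$,
$$Y_1'(X) = \frac{F_X(X,Y_1(X))}{\sqrt{\Delta(X)}},\qquad F_X(X,Y_1) = -(m-1)Y_1 + 2 - \frac{K(m-p)}{m-1}X^{(1-p)/(m-1)},$$
so at $X=X_0$ (where $Y_1=0$) we get $F_X(X_0,0) = 2(p-1)/(m-1) < 0$ and hence $Y_1'(X_0) < 0$. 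The decisive device is to introduce the auxiliary function
$$h(X) := Y_1(X) - \frac{2}{m-1} + \frac{K(m-p)}{(m-1)^2}X^{(1-p)/(m-1)},$$
whose zeros coincide exactly with the critical points of $Y_1$, since $h(X)=0 \iff F_X(X,Y_1(X))=0$. A direct evaluation gives $h(X_0) = 2(1-p)/(m-1)^2 > 0$, and at any zero $X_c \in (X_0,\infty)$ (where also $Y_1'(X_c)=0$) one finds
$$h'(X_c) = \frac{K(m-p)(1-p)}{(m-1)^3}\,X_c^{-(m+p-2)/(m-1)} > 0,$$
using $m+p>2$. If $h$ had a zero in $(X_0,\infty)$, the infimum $X_c^*$ of its zero set would satisfy $h \geq 0$ just to the left of $X_c^*$ and $h(X_c^*)=0$, forcing $h'(X_c^*) \leq 0$ and contradicting the strict positivity above. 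Hence $h > 0$ on $(X_0,\infty)$, so $Y_1$ has no critical points there, and by continuity together with $Y_1'(X_0)<0$ we conclude $Y_1' < 0$ throughout $(X_0,\infty)$. The main obstacle is precisely this monotonicity step, and the decisive insight is the identification of an auxiliary function $h$ whose zeros are the critical points of $Y_1$: that coincidence forces $h'$ strictly positive at every such zero and rules out any zero of $h$ after $X_0$.
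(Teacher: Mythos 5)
Your proof is correct, and it reaches all three conclusions by legitimate means; parts (a) and (b) are essentially the paper's computations in a tidier packaging, while part (c) takes a genuinely different route. For (a), the paper splits $[0,\infty)$ into two overlapping intervals $[0,X_1(K)]$ and $[X_2(K),\infty)$ on which $(N-2)^2$, respectively $(m-1)^2X^2$, dominates the term $4KmX^{(m-p)/(m-1)}$, and checks the overlap for small $K$; your identity $\Delta_0(X)=[(N-2)+(m-1)X]^2+8mX$ and the single ratio-maximum argument achieve the same thing, and your direct verification that $Y_1<2/(m-1)$ is actually more complete than the paper's indirect remark that an intersection of the isoclines would create an extra finite critical point. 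For (b), multiplying by the conjugate is the same computation as the paper's squaring, yielding the identical $X_0(K)=(2/K)^{(m-1)/(1-p)}$. Part (c) is where you diverge: the paper writes $Y_1'=[\Delta'-2(m-1)\sqrt{\Delta}]/(4m\sqrt{\Delta})$, reduces the sign question to $\Delta'(X)^2-4(m-1)^2\Delta(X)=32m(mN-N+2)-16mg(X)$, and then shows $g(X_0(K))\to\infty$ as $K\to0$ and that $g$ is increasing on $(X_0,\infty)$ for $K$ small, so the smallness of $K$ is invoked twice more. Your implicit-differentiation argument, with $Y_1'=F_X/\sqrt{\Delta}$ and the observation that $F_X(X,Y_1(X))=-(m-1)h(X)$, so that $h>0$ is literally equivalent to $Y_1'<0$, avoids all square-root manipulations and establishes (c) for every $K$ for which $\Delta>0$ on $[0,\infty)$, not merely for $K$ small; the first-zero contradiction via $h'(X_c)>0$ is sound. (Once you note $F_X=-(m-1)h$, your final continuity step is redundant: $h>0$ on $(X_0,\infty)$ already says $Y_1'<0$ there.) Both approaches are valid; yours is cleaner and slightly more general.
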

\begin{proof}
(a) The two branches in \eqref{branches} may intersect at points $X>0$ where $\Delta(X)=0$. We observe that in the limit case $K=0$ we have $\Delta(X)>(N-2)^2>0$, and this uniform bound from below suggests us that also for $K>0$ sufficiently close to zero $\Delta(X)$ should remain strictly positive. Below we prove it rigorously. Taking into account the expression of $\Delta(X)$ in \eqref{delta}, we find that $\Delta(X)>0$ for $X>0$ sufficiently small (since $(N-2)^2$ is dominating near $X=0$). More precisely, on the one hand we have
\begin{equation}\label{interm8}
\Delta(X)>(N-2)^2-4KmX^{(m-p)/(m-1)}\geq0, \ \ {\rm if} \ 0\leq X\leq X_1(K):=\left[\frac{(N-2)^2}{4mK}\right]^{(m-1)/(m-p)}
\end{equation}
On the other hand, taking into account that $2>(m-p)/(m-1)$, the term in $X^2$ dominates for $X$ very large:
\begin{equation}\label{interm9}
\Delta(X)>(m-1)^2X^2-4KmX^{(m-p)/(m-1)}\geq0, \ \ {\rm if} \ X>X_2(K):=\left[\frac{4Km}{(m-1)^2}\right]^{(m-1)/(m+p-2)}.
\end{equation}
Taking into account that $m>1$, $p<1$ and $m+p-2>0$ we readily get that $X_1(K)\to\infty$ and $X_2(K)\to0$ as $K\to0$. Thus, there exists $K_0>0$ such that $X_1(K)>X_2(K)$ for any $K\in(0,K_0)$ and we can gather \eqref{interm8} and \eqref{interm9} to obtain that $\Delta(X)>0$ for any $X\geq0$. The fact that $Y_1(X)<2/(m-1)$ for any $X>0$ is easy: if it were some intersection between the two isoclines, any intersection point would be a finite critical point for the system \eqref{PPSyst}, and we know that there is no other finite critical point than $P_0$ and $P_1$.

\medskip

(b) The intersections between the branch $Y_1(X)$ and the axis $Y=0$ take place at points where
$$
\Delta(X)=(N-2)+(m-1)X,
$$
or equivalently after taking squares and simplifying
$$
2(mN+2m-N+2)X-4KmX^{(m-p)/(m-1)}=2(m-1)(N-2)X,
$$
which after obvious simplifications leads to either $X=0$ or the equation $2=KX^{(1-p)/(m-1)}$ that gives
\begin{equation}\label{interm10}
X=X_0(K):=\left(\frac{2}{K}\right)^{(m-1)/(1-p)},
\end{equation}
which is the unique positive intersection, as claimed (and it has an explicit dependence on $K$). Moreover, it is obvious that in a right-neighborhood of $X=0$ we have $Y_1(X)>0$ since for $X<X_0(K)$ we have $2>KX^{(1-p)/(m-1)}$. It then remains that $Y_1(X)<0$ for $X>X_0(K)$.

\medskip

(c) By direct calculation we find
$$
Y_1'(X)=\frac{1}{4m\sqrt{\Delta(X)}}\left[\Delta'(X)-2(m-1)\sqrt{\Delta(X)}\right]
$$
and we have to study the sign of the term in brackets. To this end, we compute
\begin{equation*}
\Delta'(X)^2-4(m-1)^2\Delta(X)=32m(mN-N+2)-16mg(X),
\end{equation*}
where
\begin{equation*}
\begin{split}
g(X)&=K(m-1)(1-p)X^{(m-p)/(m-1)}+\frac{K(m-p)(mN-N+2m+2)}{m-1}X^{(1-p)/(m-1)}\\
&-\frac{K^2m(m-p)^2}{(m-1)^2}X^{2(1-p)/(m-1)}.
\end{split}
\end{equation*}
Recalling the value of $X_0(K)$ introduced in \eqref{interm10} we readily get that
\begin{equation*}
\begin{split}
g(X_0(K))&=2^{(m-p)/(m-1)}(m-1)(1-p)\left(\frac{1}{K}\right)^{(m-1)/(1-p)}\\
&+\frac{2(m-p)(mN-N+2m+2)}{m-1}-\frac{4m(m-p)^2}{(m-1)^2}
\end{split}
\end{equation*}
which tends to infinity as $K\to0$, hence for $K\in(0,K_0)$ sufficiently small we have $g(X_0(K))>2m(mN-N+2)$ and thus $Y_1'(X_0(K))<0$. We then notice that we can write
$$
g(X)=KX^{(1-p)/(m-1)}h(X),
$$
where
\begin{equation*}
h(X)=(m-1)(1-p)X+\frac{(m-p)(mN-N+2m+2)}{m-1}-\frac{Km(m-p)^2}{(m-1)^2}X^{(1-p)/(m-1)}.
\end{equation*}
We have
$$
h'(X)=(m-1)(1-p)-\frac{Km(m-p)^2(1-p)}{(m-1)^3}X^{(2-m-p)/(m-1)},
$$
thus $h(X)$ has a unique maximum point in the half-plane $X>0$, namely at a point $X_3(K)$ given by
$$
X_3(K)^{(2-m-p)/(m-1)}=(1-p)\left[(m-1)^4-Km(m-p)^2\right]
$$
Since $X_3(K)$ tends to a constant as $K\to0$, we obtain that for $K$ sufficiently small $X_3(K)<X_0(K)$, whence $h(X)$ is increasing in the interval $(X_0(K),\infty)$ (recalling that the power $(2-m-p)/(m-1)$ is negative). It then follows that $g(X)$ is also increasing for $X>X_0(K)$, thus $Y_1'(X)<0$ in the same interval, as claimed.
\end{proof}
We thus conclude from Lemma \ref{lem.small} that the two isoclines for $K\in(0,K_0)$ with $K_0>0$ sufficiently small divide the half-plane $\{X\geq0\}$ into four regions as follows:

$\bullet$ the region $\{Y>2/(m-1)\}$, where $dY/dX>0$ along the trajectories, called region (I)

$\bullet$ the region lying between the horizontal line $\{Y=2/(m-1)\}$ and the branch $Y_1(X)$, where $dY/dX<0$ along the trajectories, called region (II).

$\bullet$ the region between the two branches $Y_1(X)$ and $Y_2(X)$, where $dY/dX>0$ along the trajectories, called region (III)

$\bullet$ the region lying below the branch $Y_2(X)$, where $dY/dX<0$ along the trajectories, called region (IV).

The regions are illustrated in Figure \ref{fig1} below.

\begin{figure}[ht!]
  \begin{center}
  \includegraphics[width=11cm,height=7.5cm]{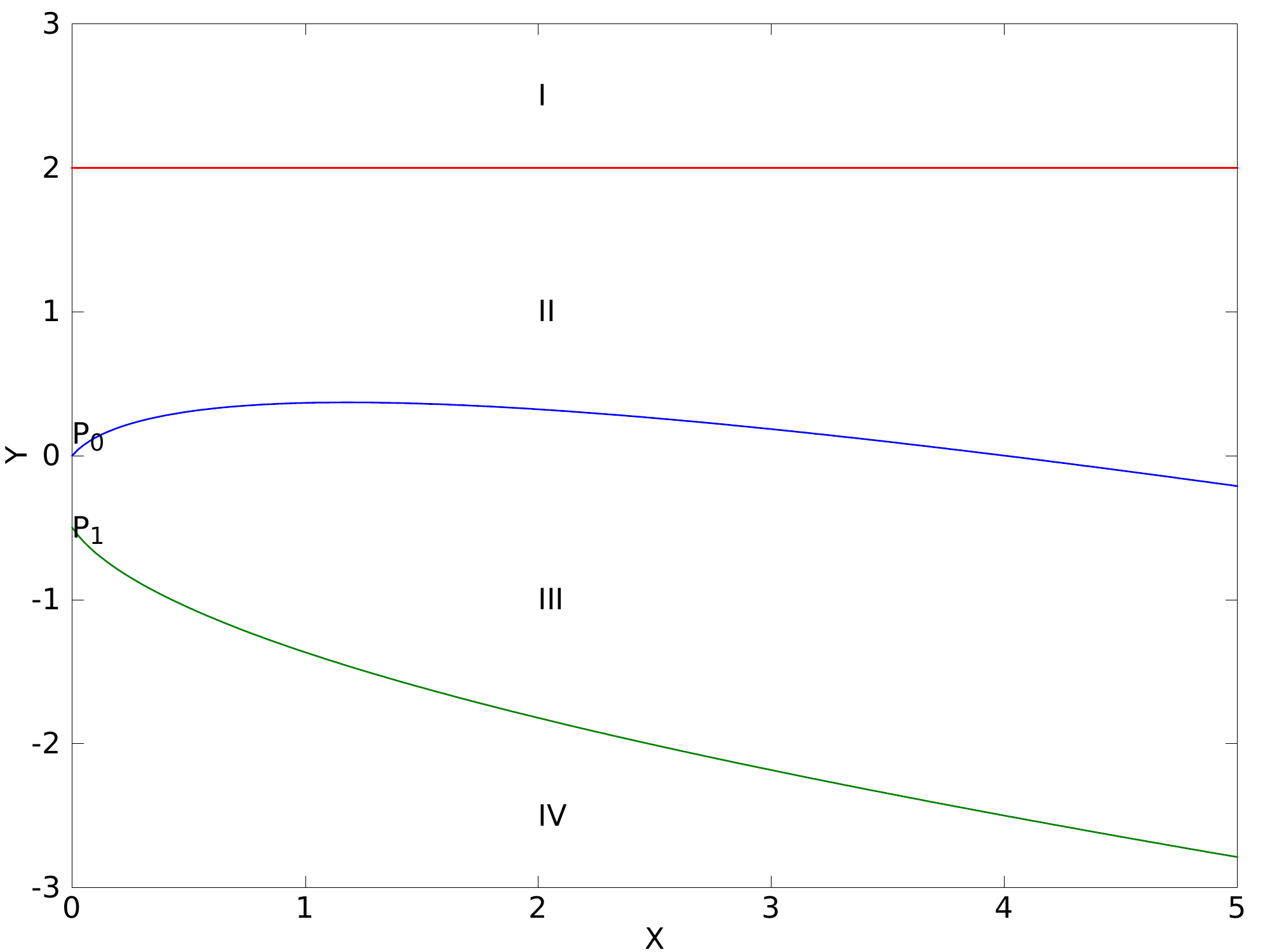}
  \end{center}
  \caption{The four regions in the phase plane separated by the isoclines}\label{fig1}
\end{figure}

We next have to study in the limit of which of these regions lie the critical points at infinity and this knowledge will allow us to establish the endpoint of the unique orbit going out of the saddle point $P_0$.
\begin{proposition}\label{prop.small}
For any $K\in(0,K_0)$ with $K_0>0$ as in the statement of Lemma \ref{lem.small}, the critical point $Q_1$ lie in the limit of region (II), while the critical points $Q_3$ and $Q_4$ lie in the limit of the region (IV). The orbit going out of $P_0$ connects to the point $Q_1$ for any $K\in(0,K_0)$.
\end{proposition}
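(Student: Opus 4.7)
My plan is to split the argument into two blocks: locate each infinity critical point with respect to the four regions determined in Lemma~\ref{lem.small}, and then trap the unique orbit issued from $P_0$ by monotonicity together with Poincar\'e--Bendixson.

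For the first block, starting from \eqref{branches}--\eqref{delta} and the inequality $1<(m-p)/(m-1)<2$, I would extract the asymptotics $Y_1(X)\sim -\frac{K}{m-1}X^{(1-p)/(m-1)}$ and $Y_2(X)\sim -\frac{m-1}{m}X$ as $X\to\infty$. Comparing with the asymptotic directions computed in Lemmas~\ref{lem.Q1}--\ref{lem.Q4}: the centre manifold of $Q_1$ shares the leading expansion of $Y_1$, while $\partial\dot Y/\partial Y\sim-(m-1)X<0$ along $Y=Y_1$ makes $Y_1$ attracting in the $Y$-direction; since $Y_1$ is moreover strictly decreasing for $X>X_0(K)$ (Lemma~\ref{lem.small}(c)), orbits tracking $Y_1$ necessarily lag behind it and thus satisfy $Y>Y_1$, so $Q_1$ is approached from region~(II). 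On the other hand $Q_3$ and $Q_4$ correspond to limiting slopes $Y/X\to-\infty$ and $Y/X\to-(m-1)$, both strictly below $\lim_{X\to\infty}Y_2/X=-(m-1)/m$, which places $Q_3,Q_4\in\overline{(\text{IV})}$.

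For the second block I track the orbit $\mathcal{O}$ leaving $P_0$. By Lemma~\ref{lem.P0} it is tangent to $(1,2/N)$, while near $X=0$ the expansion of the upper isocline reads $Y_1(X)=\tfrac{2}{N-2}X+o(X)$; since $2/N<2/(N-2)$, $\mathcal{O}$ emerges below $Y_1$, i.e.\ in region~(III). In (III) one has $\dot X>0$ and $\dot Y>0$, so $Y$ is strictly increasing and positive along $\mathcal{O}$; combined with $Y_1(X_0(K))=0$ from \eqref{interm10}, this forces $\mathcal{O}$ to meet $Y_1$ at some $X_c\le X_0(K)$ and cross into region~(II). Since $\dot Y>0$ in (III) prevents $\mathcal{O}$ from ever crossing $Y_2$ downward, region~(IV) is inaccessible; and $Y=2/(m-1)$ is a one-way barrier ($\dot X=0$, $\dot Y<0$ on it), so region~(I) is also inaccessible. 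The orbit is therefore confined to $\{Y_2(X)<Y<2/(m-1)\}$, where $\dot X>0$ makes $X$ strictly monotone along $\mathcal{O}$; Poincar\'e--Bendixson then forces the $\omega$-limit to be a single critical point at infinity, and the first block leaves $Q_1$ as the only candidate in $\overline{(\text{II})}\cup\overline{(\text{III})}$.

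The most delicate point is the sign analysis anchoring $Q_1$ to region~(II) rather than (III): the centre manifold of $Q_1$ and the curve $Y_1$ have identical leading asymptotics $-\frac{K}{m-1}X^{(1-p)/(m-1)}$, so the side is decided only by the next order. I would handle this by writing the linearised equation for $\epsilon:=Y-Y_1$ (with coefficient $\sim -(m-1)X/[KX^{(m-p)/(m-1)}]$ along the orbit and inhomogeneity $-Y_1'(X)>0$) and checking that its particular solution satisfies $\epsilon\sim\frac{K^2(1-p)}{(m-1)^3}X^{(3-m-2p)/(m-1)}>0$, while the homogeneous part decays super-exponentially in $X$. A minor secondary issue is to ensure that $\mathcal{O}$ does not oscillate back from (II) into (III); this follows from Lemma~\ref{lem.small}(c), since for $X>X_0(K)$, $Y_1'<0$ makes a downward crossing of $Y_1$ impossible via the tangency sign calculation $Y-Y_1\sim -Y_1'(X_c)(X-X_c)$.
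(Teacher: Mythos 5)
Your proposal is correct and follows essentially the same route as the paper: the same isocline decomposition into regions (I)--(IV), the same asymptotic placement of $Q_3$ and $Q_4$ below the branch $Y_2$, and the same trapping of the orbit from $P_0$ (emerging in region (III) because $2/N<2/(N-2)$, forced across $Y_1$ at some $X_c\le X_0(K)$ since $Y$ increases while $Y_1(X_0(K))=0$, and then locked into region (II) by the barrier property of $Y_1$ where $Y_1'<0$). The only local difference is how $Q_1$ is anchored to region (II): the paper simply notes that $Q_1$ lies above $Y_2$ and that orbits inside region (III) have $Y(X)$ increasing, which is incompatible with entering $Q_1$ along $Y\sim-\frac{K}{m-1}X^{(1-p)/(m-1)}\to-\infty$, whereas you resolve the tangency between $Y_1$ and the centre manifold by a second-order expansion of $Y-Y_1$ --- your computation is consistent, but heavier than the paper's soft monotonicity argument.
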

\begin{proof}
For the critical point $Q_1$ we recall from Lemma \ref{lem.Q1} that the local behavior of the profiles contained in the orbits entering it is given by \eqref{TypeII}, and in terms of the phase plane variables the relation between the components $X$ and $Y$ on the trajectories entering $Q_1$ is given by \eqref{interm.new}. It is on the other hand easy to see that the first order approximation of the branch $Y_2(X)$ is
\begin{equation}\label{interm12}
Y_2(X)\sim-\frac{m-1}{m}X, \qquad {\rm as} \ X\to\infty,
\end{equation}
and we infer from \eqref{interm.new}, \eqref{interm12} and the fact that $(1-p)/(m-1)<1$ in our range of exponents that the critical point $Q_1$ lies strictly above the branch $Y=Y_2(X)$, that is, in the limit as $X\to\infty$ of one of the regions (II) and (III). Since $Q_1$ is an attractor and it is obvious that the orbits cannot enter $Q_1$ directly in region (III) due to the monotonicity of the orbits inside this region (which is increasing in region (III) due to the fact that $dY/dX>0$), we are left with $Q_1$ in the limit of region (II).

For the critical point $Q_3$ it is immediate: since in a neighborhood of $Q_3$ we have $Y/X\to-\infty$, we infer from \eqref{interm12} that $Q_3$ lies below the branch $Y=Y_2(X)$, thus in region (IV). Finally, for the critical point $Q_4$ we know from Lemma \ref{lem.Q4} that $Y/X\to-(m-1)$ as $X\to\infty$, thus $Y\sim-(m-1)X$ as $X\to\infty$. Comparing with the asymptotic behavior of the branch $Y=Y_2(X)$ given in \eqref{interm12} and taking into account that $(m-1)/m<m-1$, we find that $Q_4$ also lies in the limit of region (IV).

We are now in a position to "drive" the orbit going out of $P_0$ and show that it connects to $Q_1$ only by geometric arguments. Indeed, as it goes out of $P_0$ tangent to the eigenvector $(1,2/N)$, it starts in the region $\{X>0,Y>0\}$ and has to do it inside the region (III), that is, below the branch $Y=Y_1(X)$, as this is the region where the trajectories are increasing as functions $Y(X)$ in the plane $(X,Y)$. Then the trajectory increases (as a function $Y(X)$) until it crosses the branch $Y_1(X)$ and then becomes decreasing. Consider now the branch $Y=Y_1(X)$ as a barrier. The normal vector to it has the direction given by the vector $(-Y_1'(X),1)$, thus the direction of the flow of the system \eqref{PPSyst} over this branch is given by the sign of the expression
$$
-Y_1'(X)X(2-(m-1)Y_1(X))
$$
which is positive in the region where $Y_1(X)$ is decreasing. It thus follows that the branch cannot be crossed from region (II) into region (III) once it becomes decreasing (in particular, in the region where $Y<0$, as given by Lemma \ref{lem.small}), hence the orbit stays inside region (II) for any $X$ large and any $K\in(0,K_0)$ for which the statement of Lemma \ref{lem.small} holds true. By the previous arguments and its monotonicity, the orbit has to enter the attractor $Q_1$.
\end{proof}
For the reader's convenience, a plot of the orbits in the phase plane corresponding to the global analysis in this Subsection is given in the first half of Figure \ref{fig3} below.

\subsection{Global analysis with $K>0$ large}\label{subsec.large}

For $K$ sufficiently large the geometric picture changes. It can be proved rather straightforwardly that there are exactly two intersections of the two branches $Y_1(X)$ and $Y_2(X)$ given in \eqref{branches} and that there is a region in between where the branches become complex (since $\Delta(X)<0$ between the two roots giving the intersection points). Thus, we can no longer use a split of the plane into regions of monotonicity as we did in Subsection \ref{subsec.small}. In this case, the approach is based on another barrier in form of a suitable line.
\begin{proposition}\label{prop.large}
There exists $K_1>0$ such that for any $K\in(K_1,\infty)$, the orbit going out of $P_0$ connects to the stable node $Q_3$ at infinity.
\end{proposition}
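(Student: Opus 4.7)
My plan is to use a straight line as a barrier, since the isoclines $Y_1,Y_2$ from \eqref{branches} become complex on a wide interval once $K$ is large. The natural choice is the line $L: F(X,Y) := Y+(m-1)X = 0$, whose slope matches the Poincar\'e direction of the saddle $Q_4$ (cf.~Lemma \ref{lem.Q4}). A direct computation of $\dot{F}=(m-1)\dot{X}+\dot{Y}$ along \eqref{PPSyst}, evaluated on $L$, produces the compact expression
\begin{equation*}
\dot{F}|_L = X\bigl[N(m-1)+2-KX^{(1-p)/(m-1)}\bigr],
\end{equation*}
which is strictly negative for $X>X^*_K:=[(N(m-1)+2)/K]^{(m-1)/(1-p)}$ and strictly positive for $0<X<X^*_K$; note that $X^*_K\to 0$ as $K\to\infty$. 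Consequently, any trajectory crossing $L$ from $\{F>0\}$ into $\{F<0\}$ must do so at some $X_c\geq X^*_K$, and once inside $\{F<0\}\cap\{X>X^*_K\}$ it is permanently trapped: there $\dot{X}=X(2-(m-1)Y)>0$ (since $Y<-(m-1)X<0$), so $X$ stays above $X^*_K$, while $L$ cannot be re-crossed upward.

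The orbit from $P_0$ starts in $\{F>0\}$, since its tangent direction $(1,2/N)$ gives $F=(2/N+m-1)X>0$ near $P_0$. The crucial step is to show that for $K$ sufficiently large this orbit actually enters $\{F<0\}$. Here I would exploit the disappearance of the real isoclines: for $K$ large, the discriminant $\Delta(X)$ defined in \eqref{delta} becomes negative on a wide interval $(a_K,b_K)$ with $a_K\sim K^{-(m-1)/(m-p)}$ and $b_K\sim K^{(m-1)/(m+p-2)}$, and in particular $a_K\gg X^*_K$ as $K\to\infty$. Inside this gap the quadratic $\dot{Y}$ (as a function of $Y$ at fixed $X$) has negative maximum $\Delta(X)/(4m)$, hence $\dot{Y}<0$ uniformly in $Y$. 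Since $\dot{X}>0$ along the orbit whenever $Y<2/(m-1)$, and since $Y$ stays well below $2/(m-1)$ for $K$ large, the orbit reaches $X=a_K$ with a moderate value of $Y$ (close to the merging value $-(N-2)/(2m)+O(a_K)$) and then moves monotonically rightward and downward through the strip. Integrating a sharp upper bound for $dY/dX=\dot{Y}/\dot{X}$, whose dominant contribution behaves like $-KX^{(m-p)/(m-1)}/\dot{X}$, shows that $Y(X)$ drops strictly below $-(m-1)X$ at some $X_c\in(a_K,b_K)$; since $X_c>a_K>X^*_K$, the orbit is trapped in $\{F<0\}$ thereafter.

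In the trapped region the only candidate infinite limits are $Q_3$ and $Q_4$ (the points $Q_1$ and $Q_2$ are excluded by the signs of $Y$ and $Y/X$). Since $Q_4$ is a saddle with a one-dimensional stable manifold, convergence to $Q_4$ occurs for at most one value of $K$, namely the critical value $K^*$ that produces the Type I profile of Theorem \ref{th.1}. Choosing $K_1$ larger than both $K^*$ and the threshold ensuring the descent estimate yields convergence to $Q_3$ for every $K>K_1$. The main obstacle will be the quantitative descent estimate of the previous paragraph: the heuristic $dY/dX\sim-(K/2)X^{(1-p)/(m-1)}$ is valid only while $|Y|$ is moderate, and tracking the orbit into the regime where $|Y|$ becomes comparable to $(m-1)X$ requires balancing the competing effects of the reaction term $-KX^{(m-p)/(m-1)}$, the quadratic $-mY^2$, and the sign-changing denominator factor $(2-(m-1)Y)$, in order to conclude a strict crossing of $L$ rather than a tangential grazing at $X^*_K$.
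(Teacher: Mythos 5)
Your barrier computation is correct, and your line $L:\ Y+(m-1)X=0$ is the same line (same slope, matched to the direction of $Q_4$) that the paper uses --- but with intercept $0$ instead of a positive one, and this is exactly what creates the gap you yourself flag at the end. Since the orbit from $P_0$ leaves the origin tangent to $(1,2/N)$, it starts on the side $\{F>0\}$ of your line, so your argument stands or falls with the quantitative descent estimate showing that $Y(X)$ actually drops below $-(m-1)X$ at some $X_c>X^*_K$. That estimate is only sketched, and you concede that tracking the orbit into the regime $|Y|\sim(m-1)X$, where $-mY^2$, $-KX^{(m-p)/(m-1)}$ and the factor $2-(m-1)Y$ all compete, is the main obstacle; as written the proof is therefore incomplete at its crucial step. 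The paper removes the difficulty entirely by translating the same line upward: it takes $r:\ (m-1)X+Y=D$ with $D=\lambda K$ and $\lambda>0$ small, for which the flux of the field of \eqref{PPSyst} through $r$ equals
\begin{equation*}
F(X)=(Dm^2-Dm+mN-N+2)X-mD^2-(N-2)D-KX^{(m-p)/(m-1)},
\end{equation*}
so that $F(0)=-mD^2-(N-2)D<0$, $F(X)\to-\infty$ as $X\to\infty$, and the maximum value $F(M(K))$ is dominated by $-m\lambda^2K^2$ for large $K$. Hence $F<0$ on all of $[0,\infty)$ once $K>K_1$, the line is a one-way barrier \emph{everywhere}, and $P_0=(0,0)$ already lies in the invariant region $\{(m-1)X+Y<D\}$: no crossing argument is needed at all.

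The exclusion of $Q_4$ is also handled suboptimally. Appealing to ``convergence to $Q_4$ occurs for at most one value of $K$, namely $K^*$'' imports the monotonicity-in-$K$ argument that is only carried out later, in the uniqueness part of the proof of Theorem \ref{th.1}; at the stage of this proposition you do not yet know that such a $K^*$ exists, is unique, or lies below your threshold, so the step is out of order (and, more importantly, unnecessary). The clean route --- used in the paper and equally available with your trapping region --- is geometric: by Lemma \ref{lem.Q4} the unique orbit entering the saddle $Q_4$ from the finite plane is tangent to the line $(m-1)X+Y=K/((m-1)(m+p-1))$, which lies in $\{F>0\}$ (respectively in $\{(m-1)X+Y>\lambda K\}$ once $\lambda<1/((m-1)(m+p-1))$); since an orbit can converge to a hyperbolic saddle only along its stable manifold, a trajectory confined to the trapping region cannot end at $Q_4$. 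With $Q_1$ and $Q_2$ excluded by the signs of $Y$ and $Y/X$, and periodic orbits excluded since $\dot{X}>0$ there, only $Q_3$ remains.
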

\begin{proof}
Consider as a barrier for the orbits the straight line
\begin{equation}\label{interm13}
r: (m-1)X+Y=D, \qquad D=\lambda K
\end{equation}
with $\lambda>0$ to be determined later. The normal vector to $r$ is given by $\overline{n}=(m-1,1)$ and the flow of the system \eqref{PPSyst} over the line $r$ is given by the sign of the expression (obtained as the scalar product of $\overline{n}$ with the vector field of the system)
$$
F(X)=(Dm^2-Dm+mN-N+2)X-mD^2-(N-2)D-KX^{(m-p)/(m-1)}.
$$
We notice that $F(0)<0$ and $\lim\limits_{X\to\infty}F(X)=-\infty$, since $(m-p)/(m-1)>1$. Moreover, $F(X)$ has a unique point of absolute maximum at
$$
M(K)=\left[\frac{(m-1)(Dm^2-Dm+mN-N+2)}{(m-p)K}\right]^{(m-1)/(1-p)}.
$$
We obtain by rather tedious but straightforward calculations that
\begin{equation}\label{interm14}
\begin{split}
F(M(K))&=(1-p)\left(\frac{m-1}{K}\right)^{(m-1)/(1-p)}\left[\frac{Dm^2-Dm+mN-N+2}{m-p}\right]^{(m-p)/(1-p)}\\
&-mD^2-(N-2)D
\end{split}
\end{equation}
and recalling that $D=\lambda K$ we can measure the dependence over $K$ to conclude that the dominating term above for $K$ large is $-mD^2=-m\lambda^2K^2$, as the rather tedious positive term in \eqref{interm14} is only of order $K$. There exists thus $K_1$ sufficiently large such that for any $K>K_1$ we have $F(M(K))<0$, whence $F(X)<0$ for any $X\geq0$ and $K\in(K_1,\infty)$. This proves that the line $r$ cannot be crossed from left to right by the orbits of the system \eqref{PPSyst}, thus the orbit going out of $P_0$ for any $K>K_1$ does not cross the line $r$.

We readily get from Lemma \ref{lem.Q4} that the orbit entering the saddle point $Q_4$ enters tangent to the eigenvector corresponding to the negative eigenvalue of the matrix $M(Q_4)$ in the proof of Lemma \ref{lem.Q4}, namely tangent to the line
$$
y=-(m-1)+\frac{K}{(m-1)(m+p-1)}z
$$
in variables $(y,z)$ of the system \eqref{PPSyst2}, which, recalling that $y=Y/X$ and $z=1/X$, translates into the straight line
$$
(m-1)X+Y=\frac{K}{(m-1)(m+p-1)}
$$
in variables $(X,Y)$. Choosing now $\lambda<1/(m-1)(m+p-1)$ in the previous argument, we infer that the orbit going out of $P_0$ with $K>0$ sufficiently large cannot enter the point $Q_4$, thus by monotonicity of the orbit $Y(X)$ and discarding the other points, it has to connect to the critical point $Q_3$.
\end{proof}
We picture in Figure \ref{fig3} an example of the outcome of the global analysis performed in the previous subsections, through a numerical experiment plotting the relevant connections in the phase plane for both $K>0$ sufficiently small and sufficiently large.

\begin{figure}[ht!]
  \begin{center}
  \subfigure[$K>0$ small]{\includegraphics[width=7.5cm,height=6cm]{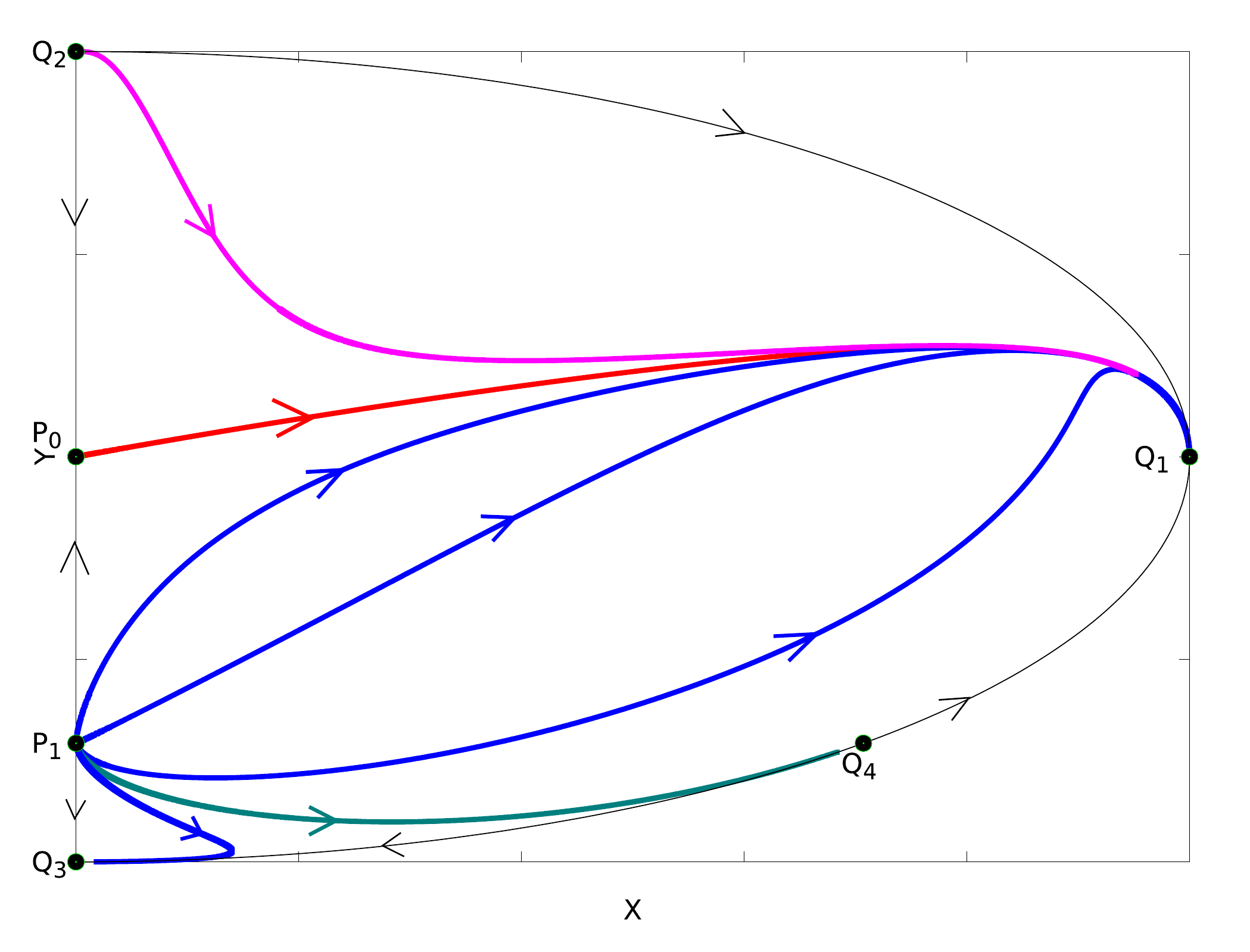}}
  \subfigure[$K>0$ large]{\includegraphics[width=7.5cm,height=6cm]{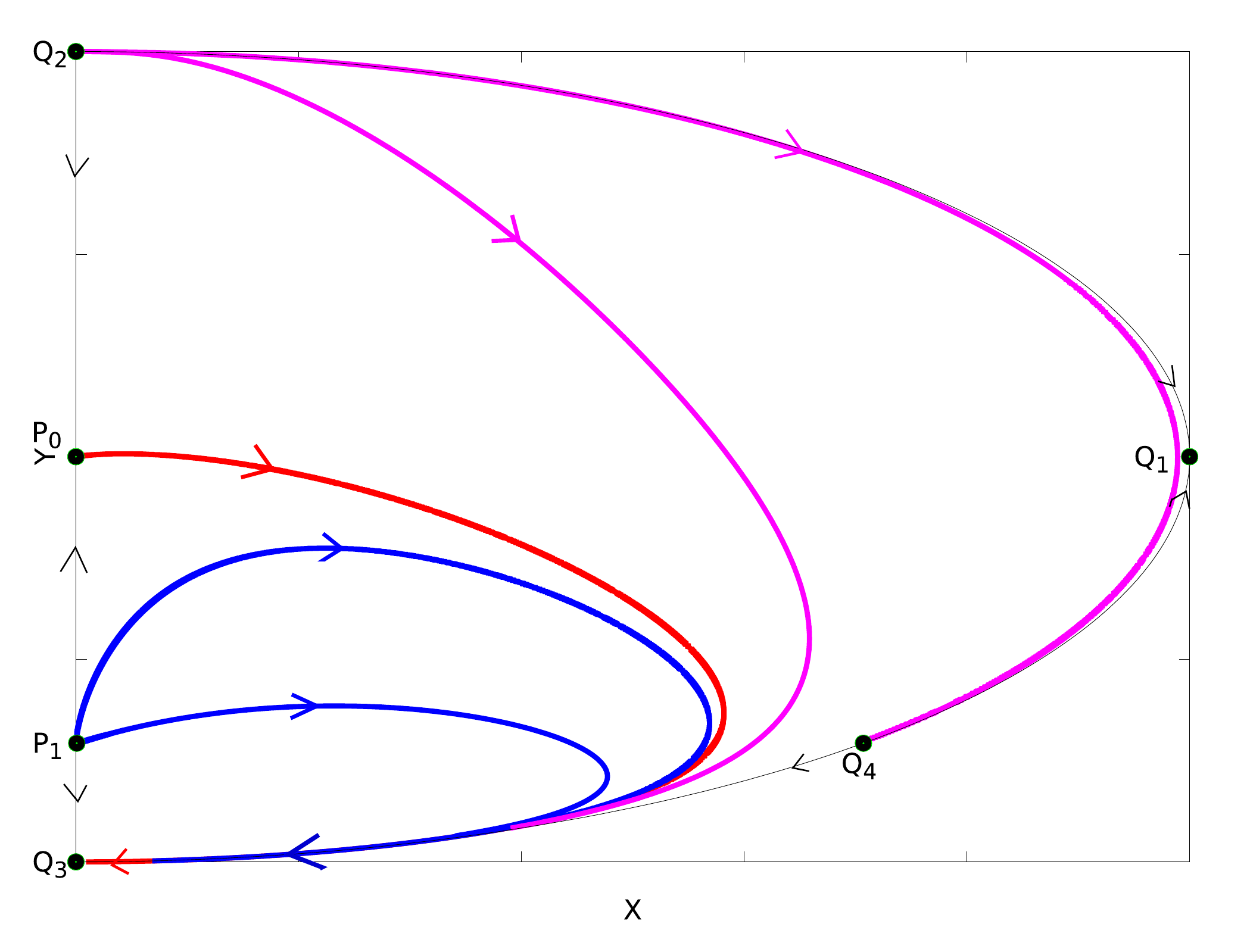}}
  \end{center}
  \caption{Trajectories in the phase plane for different values of $K>0$. Numerical experiment for $m=2$, $p=0.5$ $N=4$, $\sigma=1$ and $K=0.1$, respectively $K=8$}\label{fig3}
\end{figure}

\subsection{Proof of Theorem \ref{th.1}}\label{subsec.proof}

The analysis performed in Subsections \ref{subsec.small} and \ref{subsec.large} allows us to conclude the proof of Theorem \ref{th.1} for $m+p>2$ and $N\geq3$. We split it into the existence and the uniqueness part.

\medskip

\noindent \textbf{Existence.} Let us introduce the following three sets:
\begin{equation*}
\begin{split}
&A=\{K\in(0,\infty):{\rm the \ orbit \ from} \ P_0 \ {\rm connects \ to} \ Q_1\}, \\
&B=\{K\in(0,\infty):{\rm the \ orbit \ from} \ P_0 \ {\rm connects \ to} \ Q_4\}, \\
&C=\{K\in(0,\infty):{\rm the \ orbit \ from} \ P_0 \ {\rm connects \ to} \ Q_3\}.
\end{split}
\end{equation*}
We infer that both $A$ and $C$ are open sets from the fact that $Q_1$ and $Q_3$ are attractors. Moreover, it is obvious that the sets $A$, $B$ and $C$ are disjoint and their union is the interval $(0,\infty)$. Proposition \ref{prop.small} gives that the set $A$ is nonempty and contains an interval of the form $(0,K_0)$, while Proposition \ref{prop.large} gives that the set $C$ is nonempty and contains an interval of the form $(K_1,\infty)$. By a standard topological argument, we deduce that the set $B$ is closed and nonempty, and any element of $B$ corresponds to a connection between $P_0$ and $Q_4$ (which contains good profiles with interface of Type I).

\medskip

\noindent \textbf{Uniqueness.} We use an argument, similar to the one used in \cite{dPS00}, of opposite monotonicity with respect to the parameter $K$ of the orbits going out of $P_0$, respectively entering $Q_4$, to show that the set $B$ is a singleton. On the one hand, as shown at the end of the proof of Proposition \ref{prop.large}, the orbit entering $Q_4$ is tangent to the line
$$
Y(X)=\frac{K}{(m-1)(m+p-1)}-(m-1)X,
$$
which vary in an increasing way with respect to $K$ in a neighborhood of $Q_4$. Since
\begin{equation}\label{interm15}
\frac{dY}{dX}=\frac{-mY^2-(N-2)Y+2X-(m-1)XY-KX^{(m-p)/(m-1)}}{X(2-(m-1)Y)}
\end{equation}
is decreasing with respect to $K$ in the region $\{Y<2/(m-1)\}$, it follows by standard comparison that if $K_1<K_2$ we have $Y_{K_1}(X)<Y_{K_2}(X)$ for any $X>0$ while $Y$ remains below the horizontal line $\{Y=2/(m-1)\}$, where $Y_{K_1}(X)$ and $Y_{K_2}(X)$ are the orbits entering $Q_4$ for $K=K_1$, respectively $K=K_2$. On the other hand, in order to analyze the orbit going out of $P_0$, we have to deduce its second order development near $P_0$. We readily notice from \eqref{interm15} that, since at first order we have $Y\sim 2X/N$, that is, a linear behavior of the orbit $Y(X)$, we can neglect the terms $-mY^2$ and $-(m-1)XY$ which are at least quadratic and infer that
$$
\frac{dY}{dX}=\frac{-(N-2)Y+2X-KX^{(m-p)/(m-1)}}{2X}+o(X^{(1-p)/(m-1)}),
$$
hence we get by integration
$$
Y(X)\sim\frac{2}{N}X-\frac{K(m-1)}{N(m-1)+2(1-p)}X^{(m-p)/(m-1)}+o(X^{(m-p)/(m-1)})
$$
in a small neighborhood of $P_0$. This shows that locally near $P_0$ the orbit vary in a decreasing way with respect to the parameter $K$. A similar argument of comparison based on \eqref{interm15} (and the fact that the orbits from $P_0$ stay always below the line $\{Y=2/(m-1)\}$) gives that along the orbits going out of $P_0$ for parameters $K_1<K_2$ we have $Y_{K_2}(X)<Y_{K_1}(X)$ for any $X>0$. This opposite monotonicity with respect to $K$ along the orbits together with the existence of a connection readily gives the uniqueness of this connection between $P_0$ and $Q_4$.

\medskip

\noindent \textbf{End of the proof.} We thus conclude that there exists $K^*>0$ such that the three sets are $A=(0,K^*)$, $B=\{K^*\}$ and $C=(K^*,\infty)$. Thus, there is a unique good profile with interface of Type I, corresponding to $K=K^*$ and $\alpha=\alpha^*$, where $K^*$ and $\alpha^*$ are linked by \eqref{param}. We also deduce from \eqref{param} and the uniqueness of the orbit going out of $P_0$ for any $K\in(0,K^*)$ that there exists a unique good profile with interface of Type II for any $\alpha>\alpha^*$. Finally, no good profile with interface exists for $\alpha\in(0,\alpha^*)$, corresponding to the range $K>K^*$ which is analyzed in Proposition \ref{prop.large}.

\section{Analysis of the range $m+p=2$ in dimension $N\geq3$}\label{sec.equal}

This section is devoted to the special case when $m+p=2$, whose main effect is that the term $X^{(m-p)/(m-1)}$ in the second equation \eqref{PPSyst} becomes $X^2$, thus the dynamical system becomes quadratic. Since the analysis of the critical points $P_0$ and $P_1$ is totally similar, the differences begin with the analysis of the critical points at infinity.

\subsection{Analysis of the critical points at infinity}\label{subsec.infequal}

We notice that, by letting $W=0$ in the right hand side of \eqref{interm0}, we are left with a new term that was negligible in the range $m+p>2$, thus the equation satisfied by the critical points at infinity on the Poincar\'e sphere becomes
\begin{equation}\label{interm16}
-\overline{X}\left[\overline{Y}^2+(m-1)\overline{X}\overline{Y}+K\overline{X}^2\right]=0.
\end{equation}
If $\overline{X}=0$ we find again the critical points $Q_2=(0,1,0)$ and $Q_3=(0,-1,0)$ on the Poincar\'e sphere and the local analysis is similar to the one in Lemma \ref{lem.Q23}, as it can be noticed by an inspection of the proof. Looking for critical points with $\overline{X}\neq0$, we set $\overline{Y}=\lambda\overline{X}$ and obtain that on the one hand \eqref{interm16} gives
\begin{equation}\label{interm16bis}
\lambda^2+(m-1)\lambda+K=0
\end{equation}
and on the other hand, the condition $\overline{X}^2+\overline{Y}^2=1$ translates into $(\lambda^2+1)\overline{X}^2=1$. Equation \eqref{interm16bis} has real solutions
\begin{equation}\label{y12}
y_{1,2}=\frac{-(m-1)\pm\sqrt{(m-1)^2-4K}}{2},
\end{equation}
provided $0<K\leq(m-1)^2/4$. We thus obtain two critical points at infinity that we relabel as $Q_1$ and $Q_4$
$$
Q_1=\left(\frac{1}{\sqrt{1+y_1^2}},\frac{y_1}{\sqrt{1+y_1^2}},0\right), \qquad Q_4=\left(\frac{1}{\sqrt{1+y_2^2}},\frac{y_2}{\sqrt{1+y_2^2}},0\right),
$$
where $y_2<y_1<0$ are defined in \eqref{y12}, which are different for $0<K<(m-1)^2/4$, coincide for $K=(m-1)^2/4$ and disappear for $K>(m-1)^2/4$. We perform next the local analysis near these points, provided $0<K<(m-1)^2/4$.
\begin{lemma}\label{lem.Q14}
For $K\in(0,(m-1)^2/4)$ the critical point $Q_1$ is a stable node and the critical point $Q_4$ is a saddle point. The profiles contained in the orbits entering them have an interface at some $\xi_0\in(0,\infty)$ with the local behavior
\begin{equation}\label{beh.Q14}
f(\xi)\sim\left(D+\frac{(m-1)y_i}{2}\xi^2\right)_{+}^{1/(m-1)}, \qquad D>0 \ {\rm free \ constant}
\end{equation}
where $y_1$ corresponds to the point $Q_1$ and $y_2$ corresponds to the point $Q_4$.
\end{lemma}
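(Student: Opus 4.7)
The plan is to exploit the fact that when $m+p=2$ the exponent $(m+p-2)/(m-1)$ vanishes, so that the Poincar\'e-type chart $(y,z)=(Y/X,\,1/X)$ used for $Q_{1}$ in Lemma \ref{lem.Q1} now yields a genuinely polynomial system near the equator of the Poincar\'e sphere, without needing any auxiliary change of variable such as $w=z^{(m+p-2)/(m-1)}$. Substituting $Y=y/z$, $X=1/z$ into \eqref{PPSyst} and rescaling the independent variable by a factor of $z$, I expect to obtain the planar system
\begin{equation*}
y'=-\bigl(y^{2}+(m-1)y+K\bigr)+(2-Ny)z,\qquad z'=(m-1)yz-2z^{2},
\end{equation*}
whose zeros on $\{z=0\}$ are exactly the roots $y_{1}$, $y_{2}$ of \eqref{interm16bis} and thus correspond to $Q_{1}$ and $Q_{4}$ on the Poincar\'e sphere.

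The second step is a direct linearization at $(y_{i},0)$. The Jacobian is upper triangular with diagonal entries $-2y_{i}-(m-1)$ and $(m-1)y_{i}$, and using the explicit formulas \eqref{y12} one computes
\begin{equation*}
-2y_{1}-(m-1)=-\sqrt{(m-1)^{2}-4K}<0,\qquad -2y_{2}-(m-1)=+\sqrt{(m-1)^{2}-4K}>0,
\end{equation*}
while both $(m-1)y_{1}$ and $(m-1)y_{2}$ are strictly negative since $y_{1},y_{2}<0$. Under the standing assumption $0<K<(m-1)^{2}/4$ the two eigenvalues at $Q_{1}$ are therefore negative while those at $Q_{4}$ have opposite signs, yielding respectively a stable node and a saddle point. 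This is the algebraic part of the statement and I expect it to go through cleanly; the only point to watch is to stay strictly away from the degenerate case $K=(m-1)^{2}/4$, which is excluded by hypothesis.

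To recover the profile behavior I would use that any orbit reaching $(y_{i},0)$ in the new chart satisfies $Y/X\to y_{i}$ in the original variables. Substituting the definitions in \eqref{change} this is equivalent to an asymptotic of the form $(f^{m-1})'(\xi)\sim C\,y_{i}\,\xi$ with $C$ a positive constant depending only on $\alpha$ and $m$, and a single integration yields
\begin{equation*}
f(\xi)\sim\bigl(D+c_{i}\xi^{2}\bigr)^{1/(m-1)},
\end{equation*}
which is of the form stated in \eqref{beh.Q14}, with $c_{i}$ proportional to $(m-1)y_{i}<0$ so that the expression vanishes at a finite $\xi_{0}>0$. It then remains to rule out matching this local ansatz at $\xi=0$ or $\xi=\infty$: since we are analyzing points at infinity in the phase plane, both regimes force $X\to\infty$, which combined with the above either kills the free constant $D$ (contradicting the positivity of $f$ near $\xi=0$) or would force $f^{m-1}$ to become negative at large $\xi$ (contradicting the positivity of $f$ directly). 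The one genuinely delicate ingredient I foresee is this last sign-chasing, but it will follow the same template already used for $Q_{4}$ in Lemma \ref{lem.Q4}.
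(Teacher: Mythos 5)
Your proposal is correct and follows essentially the same route as the paper: the chart $(y,z)=(Y/X,1/X)$ reducing \eqref{PPSyst} to the polynomial system \eqref{PPSyst.equal}, the upper-triangular Jacobian at $(y_i,0)$ with diagonal entries $-(m-1)-2y_i=\mp\sqrt{(m-1)^2-4K}$ and $(m-1)y_i<0$, and the integration of $Y/X\sim y_i$ to get \eqref{beh.Q14} with the interface placed at finite $\xi_0$ by the same sign-chasing as in Lemmas \ref{lem.Q1} and \ref{lem.Q4}. No substantive differences.
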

\begin{proof}
In order to study the local behavior of the system \eqref{PPSyst} in a neighborhood of these points, we use again the system \eqref{PPSyst2}, which in our case becomes
\begin{equation}\label{PPSyst.equal}
\left\{\begin{array}{ll}\dot{y}=2z-(m-1)y-Nyz-y^2-K, \\ \dot{z}=(m-1)yz-2z^2\end{array}\right.
\end{equation}
and the two critical points in these variables read $Q_1=(y_1,0)$ and $Q_2=(y_2,0)$. The linearization of the system \eqref{PPSyst.equal} in a neighborhood of these points have the matrices
$$
M(Q_1)=\left(
  \begin{array}{cc}
    -(m-1)-2y_1 & 2-Ny_1 \\
    0 & (m-1)y_1 \\
  \end{array}
\right), \ M(Q_4)=\left(
  \begin{array}{cc}
    -(m-1)-2y_2 & 2-Ny_2 \\
    0 & (m-1)y_2 \\
  \end{array}
\right),
$$
thus, recalling that $y_2<y_1<0$ and noticing that
$$
-(m-1)-2y_i=\mp\sqrt{(m-1)^2-4K},
$$
we readily get that $Q_1$ is a stable node and $Q_4$ is a saddle point. The orbits entering these points are characterized by the fact that $Y/X\sim y_i$, which in terms of profiles leads to $(f^{m-2}f')(\xi)\sim y_i\xi$ and we obtain the local behavior \eqref{beh.Q14} by integration. The fact that the local behavior is taken as $\xi\to\xi_0$ finite is proved similarly as in the proofs of Lemmas \ref{lem.Q1} and \ref{lem.Q4} and we omit the details here.
\end{proof}

\noindent \textbf{Remark.} When $m+p=2$ there is a single type of interface behavior and the only difference between the two types of profiles is actually the constant multiplying $\xi^2$ in \eqref{beh.Q14}.

\subsection{Global analysis for $m+p=2$}\label{subsec.globalequal}

The behavior of the connections in the phase plane of the system \eqref{PPSyst} when $m+p=2$ can be performed by adapting conveniently the similar parts of the analysis done for $m+p>2$ in Section \ref{sec.large}, but the specific relation between the exponents allows for some simplifications. First of all, it is very easy to see that for any $K>(m-1)^2/4$ the orbit going out of $P_0$ connects to $Q_3$, since there are no other critical points where it can connect and no limit cycles are allowed since $Y(X)$ is decreasing for $X$ large. It remains to analyze the orbits in the more interesting range $0<K\leq(m-1)^2/4$.
\begin{proposition}\label{prop.equal}
For any $K\in(0,(m-1)^2/4)$ the orbit going out of $P_0$ enters the stable node $Q_1$.
\end{proposition}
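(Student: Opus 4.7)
The plan is to adapt the region-analysis technique of Subsection \ref{subsec.small}, exploiting the key simplification available when $m+p=2$: the system \eqref{PPSyst} is then polynomial of degree two and the asymptotic slopes of trajectories approaching $Q_1$ and $Q_4$ are the two explicit roots $y_1>y_2$ of $\lambda^2+(m-1)\lambda+K=0$, which are real and distinct exactly in the range $K\in(0,(m-1)^2/4)$. Rather than tracking the full isocline picture (which, depending on the sign of $(m-1)^2-4mK$, may not even extend to infinity), I would set up two very simple linear barriers that trap the orbit going out of $P_0$ in a strip in which $Q_1$ is the only admissible asymptotic destination.

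The first barrier is the horizontal line $\{Y=2/(m-1)\}$. A direct substitution yields
\begin{equation*}
\dot Y\big|_{Y=2/(m-1)}=-\frac{4m}{(m-1)^2}-\frac{2(N-2)}{m-1}-KX^2<0,
\end{equation*}
so the flow across this line is strictly vertical and downward, and no trajectory starting below it can cross it upward. The second barrier is $\ell:=\{Y=\mu X\}$ for an arbitrarily fixed $\mu\in(y_2,y_1)$. An elementary computation on $\ell$ gives
\begin{equation*}
(-\mu\dot X+\dot Y)\big|_{\ell}=(2-N\mu)X-(\mu^2+(m-1)\mu+K)X^2,
\end{equation*}
and both coefficients are strictly positive: $2-N\mu>0$ since $\mu<0$ (because $y_1y_2=K>0$ and $y_1+y_2=-(m-1)<0$), while $\mu^2+(m-1)\mu+K<0$ on $(y_2,y_1)$ since $y_1,y_2$ are its roots. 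The normal vector $(-\mu,1)$ has positive $Y$-component, so the flow crosses $\ell$ upward at every $X>0$, making $\ell$ an impassable barrier from above. By Lemma \ref{lem.P0} the orbit out of $P_0$ leaves tangent to $(1,2/N)$, hence for small $X>0$ it sits in the strip $\{\mu X<Y<2/(m-1)\}$, and the two barriers force it to stay in this strip for all subsequent times.

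To conclude it remains to identify the limit. Throughout the strip one has $\dot X=X(2-(m-1)Y)>0$, so $X$ is strictly monotone along the orbit and no closed trajectory is possible. A standard Poincar\'e--Bendixson argument on the Poincar\'e sphere then forces the orbit to converge to a critical point lying in the closure of the trapping region. The upper barrier rules out $Q_2$ (which requires $Y\to+\infty$), the lower barrier $Y>\mu X$ excludes $Q_3$ (asymptotic slope $-\infty$) and $Q_4$ (asymptotic slope $y_2<\mu$), and the finite points $P_0$ (a saddle already left by the orbit) and $P_1$ (on the invariant axis $\{X=0\}$) are inaccessible from the interior of the strip. The only remaining possibility is $Q_1$, and this is consistent with its entering slope $y_1\in(\mu,0)$ identified in Lemma \ref{lem.Q14}. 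The most delicate step I anticipate is the Poincar\'e--Bendixson argument on the compactified phase plane, since one must rule out that the $\omega$-limit contains an arc of the equator of critical points or joins the saddle $Q_4$ along its one-dimensional stable manifold; but this is ruled out directly by the lower barrier, which keeps the orbit strictly above the ray in whose closure $Q_4$ sits.
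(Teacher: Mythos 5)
Your proof is correct, but it follows a genuinely different route from the paper's. The paper passes to the inverted variables $(y,z)=(Y/X,1/X)$, i.e.\ to the system \eqref{PPSyst.equal}, studies the shape of the isocline $z(y)=(y^2+(m-1)y+K)/(2-Ny)$ (locating its minimum, its two monotone branches emanating from $(y_2,0)$ and $(y_1,0)$, and the vertical asymptote at $y=2/N$), splits the half-plane $\{z\geq0\}$ into four monotonicity regions, and uses the increasing branch of that isocline as the impassable barrier separating the orbit coming from $P_0$ (forced into region (III)) from the one-dimensional stable manifold of the saddle $Q_4$ (forced into region (II)). You instead stay in the original $(X,Y)$ plane and replace the curved isocline barrier by two straight ones: the line $\{Y=2/(m-1)\}$, on which $\dot X=0$ and $\dot Y<0$, and the ray $\{Y=\mu X\}$ with $\mu\in(y_2,y_1)$, on which the sign computation $(2-N\mu)X-(\mu^2+(m-1)\mu+K)X^2>0$ is immediate precisely because $\mu$ sits strictly between the two roots; your ray corresponds to the vertical line $y=\mu$ in the paper's coordinates, which is a strictly simpler separatrix than the isocline. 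Both arguments then conclude by monotonicity of $X$ plus identification of the only admissible critical point at infinity, and your closing discussion correctly handles the one delicate point (excluding an equatorial arc or the stable manifold of $Q_4$ from the $\omega$-limit set), since the accessible arc of the equator, with slopes in $[\mu,0]$, contains $Q_1$ as its only critical point and $Q_1$ is a node, not a saddle, so no graphic through it can occur. What your version buys is a shorter, more explicit computation that also makes visible why the hypothesis $K<(m-1)^2/4$ is exactly what is needed (real distinct roots leave room to choose $\mu$); what the paper's version buys is a fuller phase portrait, in particular the information that the orbit entering $Q_4$ originates at $P_1$, which complements the classification of all profiles.
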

\begin{proof}
We work with the system \eqref{PPSyst.equal}, which is obtained by the change of variables $y=Y/X$, $z=1/X$ and where the critical points $Q_1$ and $Q_4$ lie in the finite part and the points $P_0$ and $P_1$ in the infinite part. Since the orbit going out of $P_0$ is locally tangent to the eigenvector $(1,2/N)$, the critical point at infinity in the system \eqref{PPSyst.equal} that matches to $P_0$ has in the limit the components $y=2/N$ and $z=+\infty$. We study the isoclines of the system \eqref{PPSyst.equal}. The first and most important one is the curve where the first equation in \eqref{PPSyst.equal} vanishes, with
\begin{equation}\label{interm17}
z(y)=\frac{y^2+(m-1)y+K}{2-Ny}, \qquad z'(y)=\frac{-Ny^2+KN+2(m-1)+4y}{(2-Ny)^2},
\end{equation}
which is composed in the region $z>0$ by two branches starting each of them at the critical points $(y_2,0)$, respectively $(y_1,0)$. It is a simple exercise of calculus to notice that, in the range $y<2/N$, the equation $z'(y)=0$ has a single solution (a minimum point lying in the interval $(y_2,y_1)$ that can be made explicit) and thus the branch starting in the region $\{z>0\}$ from the point $(y_2,0)$ has $z(y)$ decreasing, while the branch starting in the region $\{z>0\}$ from the point $(y_1,0)$ has $z(y)$ increasing, until reaching the vertical asymptote $y=2/N$. The second isocline is the straight line $z=(m-1)y/2$ which does not intersect any of the two previous branches as it can be easily seen by the non-existence of a further finite critical point in the phase plane associated to the system \eqref{PPSyst.equal}. We thus conclude that the isoclines divide the half-plane $\{z\geq0\}$ into four regions according to the sign of
$$
\frac{dz(y)}{dy}=\frac{(m-1)yz-2z^2}{2z-(m-1)y-y^2-Nyz-K},
$$
namely:

$\bullet$ a region (I) at the left of the decreasing branch starting from $(y_2,0)$, where $dz(y)/dy>0$

$\bullet$ a region (II) between the two branches in the region $\{z>0\}$ of the function $z(y)$, where $dz(y)/dy<0$

$\bullet$ a region (III) between the increasing branch of $z(y)$ starting from the point $(y_1,0)$ and the straight line $z=(m-1)y/2$, where $dz(y)/dy>0$

$\bullet$ a region (IV) corresponding to the region $\{z<(m-1)y/2\}$, where $dz(y)/dy<0$.

For the easiness of the reading, the regions (I), (II), (III), (IV) above are represented in Figure \ref{fig2}.

\begin{figure}[ht!]
  \begin{center}
  \includegraphics[width=11cm,height=7.5cm]{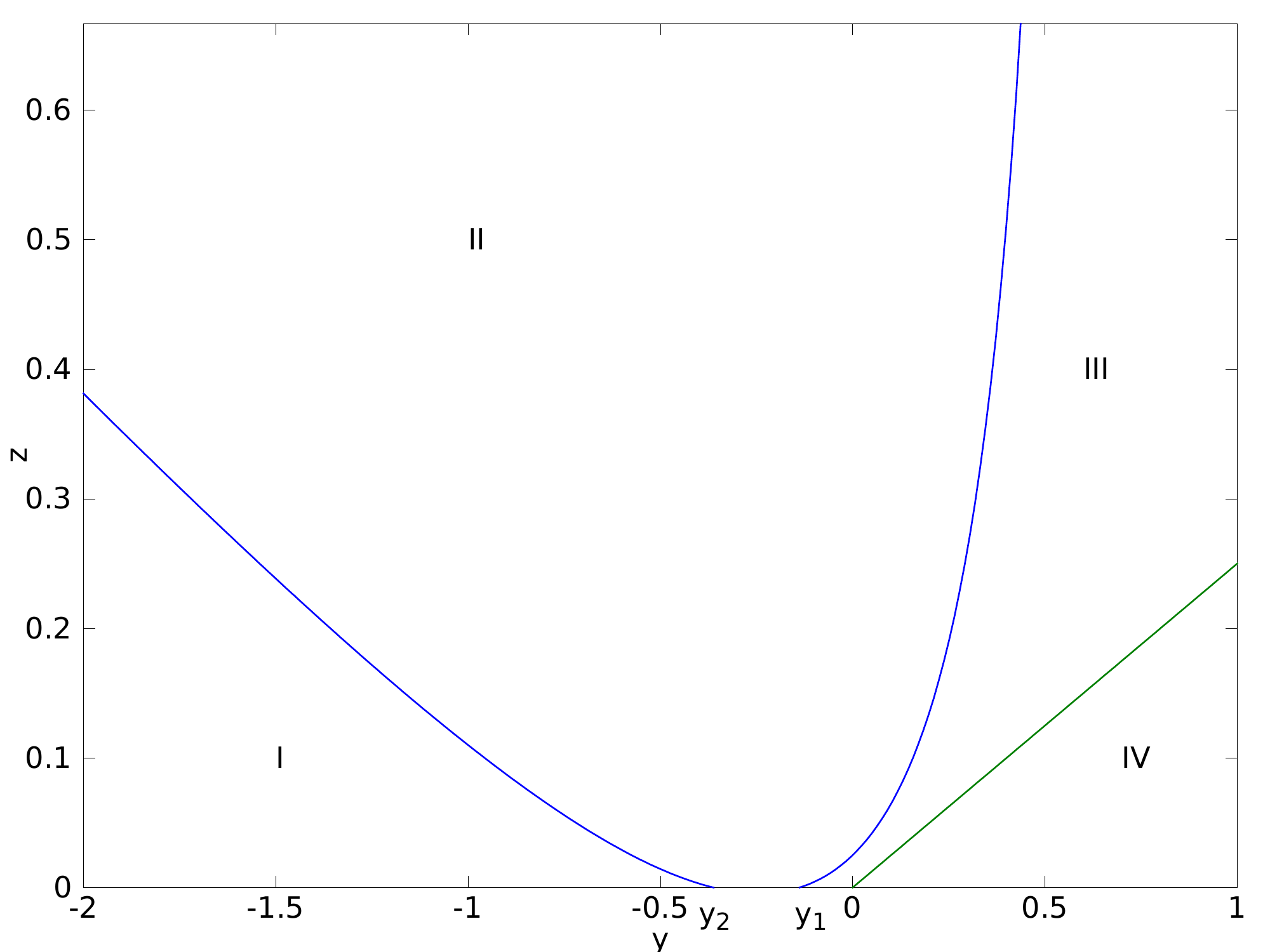}
  \end{center}
  \caption{The regions in the phase plane associated to the system \eqref{PPSyst.equal}}\label{fig2}
\end{figure}

From this monotonicity along the trajectories of the system \eqref{PPSyst.equal} we deduce that the unique orbit entering the saddle point $Q_4$, corresponding in our study to the point $(y_2,0)$, has to enter the point through the region (II), while the unique orbit going out of $P_0$ has to enter necessarily the region (III) above. Thus the two points are separated at least in a neighborhood of the two critical points by the increasing branch of the isocline starting from $(y_2,0)$ in \eqref{interm17}. Considering this isocline of equation $z(y)-z=0$, the normal vector has direction $(z'(y),-1)$ and the flow of the system \eqref{PPSyst.equal} over it is given by the sign of the expression
$$
-\dot{z}+z'(y)\dot{y}=-(m-1)yz+2z^2=2z\left(z-\frac{m-1}{2}y\right)>0,
$$
thus the branch cannot be crossed from right to left since $z'(y)>0$ on it. This proves that the orbit going out of $P_0$ must always enter the point $Q_1$ (identified in our new variables as $(y_1,0)$) through the region (III), while the orbit entering the saddle point $Q_4$ (identified as $(y_2,0)$) must come from the node $P_1$, and this holds true for any $K\in(0,(m-1)^2/4)$, ending the proof.
\end{proof}
We are now in a position to prove Theorem \ref{th.1bis}.
\begin{proof}[Proof of Theorem \ref{th.1bis}]
We have proved in Proposition \ref{prop.equal} that for any $K\in(0,(m-1)^2/4)$ there exists a unique orbit going out of $P_0$ and entering the critical point $Q_1$, corresponding to a unique good profile such that $f(0)=1$ and having an interface behavior given locally by \eqref{beh.Q14} (with $y=y_1$), corresponding qualitatively to an interface of Type I. On the other hand, the set
$$
C=\{K\in(0,\infty):{\rm the \ orbit \ from} \ P_0 \ {\rm connects \ to} \ Q_3\}
$$
is an open set containing the interval $((m-1)^2/4,\infty)$ but which does not contain any point $K<(m-1)^2/4$. It is thus obvious that
$$
C=\left(\frac{(m-1)^2}{4},\infty\right),
$$
the interval for which there is no good profile, while for $K=K^*=(m-1)^2/4$ the only remaining possibility is that the orbit going out of $P_0$ enters the unified critical point $Q_1=Q_4$ (which is a saddle-node). Thus, the case $K=(m-1)^2/4$ should be added to the interval of existence and uniqueness of the good profile with interface. Finally, recalling that $(m-p)/(m-1)=2$ in our case and letting $K=(m-1)^2/4=K^*$ in \eqref{param}, we obtain
$$
\frac{(m-1)^2}{4}=\frac{4m}{(\alpha^*)^2},
$$
which gives the explicit value of $\alpha^*$ in \eqref{alpha.equal}.
\end{proof}

\section{Analysis in dimensions $N=2$ and $N=1$}\label{sec.lowdim}

In dimensions $N=2$ and $N=1$ there are a few technical differences in the previous analysis, although the results are the same. For the reader's convenience, we decided not to merge them in the previous sections and devote a separate chapter to them. We give below only the differences with respect to the previous sections.

\subsection{Dimension $N=2$}\label{subsec.N2}

In dimension $N=2$, the system \eqref{PPSyst} becomes
\begin{equation}\label{PPSystN2}
\left\{\begin{array}{ll}\dot{X}=X(2-(m-1)Y),\\ \dot{Y}=-mY^2+2X-(m-1)XY-KX^{(m-p)/(m-1)},\end{array}\right.
\end{equation}
and the critical points $P_0$ and $P_1$ coincide at the origin (that we keep calling $P_0$). The local behavior of the orbits in the neighborhood of this critical point is analyzed below.
\begin{lemma}\label{lem.P0P1}
The critical point $P_0=(0,0)$ of the system \eqref{PPSystN2} is a saddle-node. There is a unique orbit going out of $P_0$ into the phase plane tangent to the line $Y=X$, containing profiles with the good behavior: $f(0)=A>0$, $f'(0)=0$. All the other orbits go out of $P_0$ tangent to the $Y$ axis into the region $\{Y<0\}$ and contain profiles such that
\begin{equation}\label{beh.P02}
f(\xi)\sim D\left(-\ln\,\xi\right)^{1/m}, \qquad {\rm as} \ \xi\to0, \ D>0 \ {\rm free \ constant}.
\end{equation}
\end{lemma}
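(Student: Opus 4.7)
Since the critical points $P_0$ and $P_1$ coalesce at the origin when $N=2$, the plan is a saddle-node analysis: first linearize \eqref{PPSystN2} at $P_0=(0,0)$ to identify the unstable direction and the center direction, and then invoke center manifold theory to classify the orbits leaving $P_0$ along each.

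The linearization at $P_0$ has matrix
$$
M(P_0)=\left(\begin{array}{cc} 2 & 0 \\ 2 & 0 \end{array}\right),
$$
with eigenvalues $\lambda_1=2$ and $\lambda_2=0$ and associated eigenvectors $(1,1)$ and $(0,1)$; this confirms that $P_0$ is a saddle-node, having a one-dimensional unstable manifold tangent to the line $Y=X$ and a one-dimensional center manifold tangent to the $Y$ axis. For the unstable manifold, the tangency $Y\sim X$ translates via \eqref{change} into $\xi f'(\xi)/f(\xi) \sim (\alpha/(2m))\xi^2 f(\xi)^{1-m}$; separation of variables and integration yield
$$
f(\xi) \sim \left(C+\frac{\alpha(m-1)}{4m}\xi^2\right)^{1/(m-1)},
$$
and the same contradiction argument used in the proof of Lemma \ref{lem.P0} rules out any realization other than $\xi\to 0^+$, producing profiles with $f(0)=C^{1/(m-1)}>0$ and $f'(0)=0$.

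The main step is the center manifold analysis. Following \cite[Section 2.12]{Pe}, I would look for an invariant graph $X=h(Y)$ with $h(0)=h'(0)=0$; substituting into \eqref{PPSystN2} gives to leading order the reduced flow
$$
\dot{Y} = -mY^2+o(Y^2).
$$
Since $\dot{Y}<0$ on both sides of the origin on the center manifold, orbits with $Y>0$ approach $P_0$ as $\eta$ increases (they are incoming), while orbits with $Y<0$ move away from it (they are outgoing). Moreover, since the non-center eigenvalue $\lambda_1=2$ is strictly positive, nearby orbits are attracted to the center manifold in backward time, and therefore they also leave $P_0$ tangent to the $Y$ axis. Hence, besides the single orbit on the unstable manifold, every orbit going out of $P_0$ does so tangent to the $Y$ axis in the half-plane $\{Y<0\}$.

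To extract the asymptotic behavior \eqref{beh.P02}, I integrate $\dot{Y}\sim -mY^2$ to obtain $Y(\eta)\sim 1/(m\eta)$ as $\eta\to-\infty$, and invert the change of variable \eqref{change} (which makes $\eta$ proportional to $\ln\xi$) to get $Y\sim c/\ln\xi$ with the appropriate constant as $\xi\to 0^+$. Inserting $Y=\xi f'/f$ and using the identity $(\ln|\ln\xi|)'=1/(\xi\ln\xi)$, one integration produces $f(\xi)\sim D(-\ln\xi)^{1/m}$; the exclusion reasoning used in the proofs of Lemmas \ref{lem.P0}--\ref{lem.Q4} once more rules out that this development can be taken at a finite interior point or at infinity. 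The main obstacle I anticipate is the center manifold computation itself: one has to verify that the term $-mY^2$ genuinely dominates the reduced flow (the corrections coming from the graph $X=h(Y)$ enter through $X$, $XY$ and $X^{(m-p)/(m-1)}$, all of order at least $Y^2$ and hence subordinate), and to keep track of the proportionality constant between $\eta$ and $\ln\xi$ so that the exponent in \eqref{beh.P02} comes out precisely as $1/m$ rather than some other power.
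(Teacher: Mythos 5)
Your structural analysis of $P_0$ (linearization, eigenvalues $2$ and $0$, unique orbit tangent to $(1,1)$ carrying the good profiles, all remaining orbits tangent to the $Y$ axis) matches the paper, which reaches the saddle-node classification by the shift $Z=Y-X$ plus \cite[Theorem 1, Section 2.11]{Pe} and \cite[Section 3.4]{GH}; your center-manifold route is equivalent, since the nonvanishing quadratic coefficient $-m$ in the reduced flow is exactly the hypothesis of Perko's theorem. Two refinements would tighten it. First, your claim that the corrections from the graph $X=h(Y)$ are ``of order at least $Y^2$ and hence subordinate'' is not right as stated ($O(Y^2)$ is the same order as $-mY^2$); the clean observation is that $\{X=0\}$ is invariant and tangent to the center eigenvector, so the center manifold is the $Y$ axis itself and the reduced flow is exactly $\dot{Y}=-mY^2$ when $N=2$. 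For the nearby outgoing orbits (which are not on the center manifold) one still has to check $X=o(Y^2)$, which holds since $X\sim C\xi^2(-\ln\xi)^{(1-m)/m}$ decays much faster than $Y^2\sim C(\ln\xi)^{-2}$.

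The derivation of \eqref{beh.P02} is where you genuinely diverge from the paper, and where the danger lies. The paper deliberately avoids integrating the phase-plane relation (it calls \eqref{interm18} ``difficult to integrate'') and instead obtains \eqref{beh.P02} by a dominant-balance argument directly in \eqref{ODE}: the limits \eqref{interm19}--\eqref{interm21} show the last three terms are negligible against $(f^m)'(\xi)/\xi$, leaving $(f^m)''+(f^m)'/\xi\sim0$, which integrates to $f^m\sim c\ln\xi+d$ and hence the exponent $1/m$. Your route through $Y(\eta)\sim 1/(m\eta)$ is viable but hinges entirely on the proportionality constant between $\eta$ and $\ln\xi$, which you explicitly leave unresolved. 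Be warned that if you read \eqref{change} literally, $d\eta=d\xi/(m^2\xi)$ gives $\eta\sim(1/m^2)\ln\xi$, hence $Y\sim m/\ln\xi$ and $f\sim D(-\ln\xi)^{m}$ --- the wrong exponent. Recomputing $\xi\,dX/d\xi$ from the definition of $X$ shows that the system \eqref{PPSyst} as displayed is generated by $d/d\eta=\xi\,d/d\xi$ (the factor $m^2$ in \eqref{change} is not consistent with \eqref{PPSyst}), which restores $Y\sim1/(m\ln\xi)$ and the exponent $1/m$. Equivalently, integrating $dY/dX\sim-mY^2/(2X)$ together with $\ln X\sim2\ln\xi$ gives the same answer without ever touching $\eta$. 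Unless you pin this constant down, your argument does not actually deliver \eqref{beh.P02}; the paper's dominant-balance computation is the safer way to close this step.
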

\begin{proof}
The linearization of the system in a neighborhood of the point $P_0$ has the matrix
$$
M(P_0)=\left(
  \begin{array}{cc}
    2 & 0 \\
    2 & 0 \\
  \end{array}
\right),
$$
with eigenvalues $\lambda_1=2$ and $\lambda_2=0$. By introducing the change of variable $Z=Y-X$, the system \eqref{PPSyst} becomes
\begin{equation}\label{PPSyst.N2}
\left\{\begin{array}{ll}\dot{X}=X(2-(m-1)X-(m-1)Z),\\ \dot{Z}=-m(X+Z)^2-KX^{(m-p)/(m-1)},\end{array}\right.
\end{equation}
and we can apply \cite[Theorem 1, Section 2.11]{Pe} to its origin (which is the same critical point as $P_0$) to conclude that it is a saddle-node. By the theory in \cite[Section 3.4]{GH}, there exists only one orbit going out of $P_0$ tangent to the eigenvector $(1,1)$ corresponding to the eigenvalue $\lambda_1=2$, that contains profiles whose analysis is completely similar to the one in Lemma \ref{lem.P0} (for the particular value $N=2$) leading to the good behavior $f(0)=A>0$, $f'(0)=0$. All the other orbits are tangent to the $Y$ axis, as this is the direction of the eigenvector $e_2=(0,1)$ corresponding to the eigenvalue $\lambda_2=0$. By neglecting the lower order terms we obtain that these trajectories satisfy in a neighborhood of $P_0$
$$
\frac{dY}{dX}\sim-\frac{mY^2}{2X}
$$
which after integration leads to
\begin{equation}\label{interm18}
\frac{1}{Y}\sim\frac{m}{2}\ln\,X,
\end{equation}
which shows that all these orbits go into the region $\{Y<0\}$ of the phase plane. Since \eqref{interm18} is difficult to integrate, we obtain the first order approximation as $\xi\to0$ directly from the equation \eqref{ODE} by showing that the last three terms are negligible on the orbits having $X\to0$, $Y\to0$ and $Y/X\to-\infty$ with respect to the previous two. Indeed we have
\begin{equation}\label{interm19}
\frac{\beta\xi f'(\xi)}{(f^m)'(\xi)/\xi}=\frac{\beta}{m}\xi^{2}f(\xi)^{1-m}=\frac{2\beta}{\alpha}X(\xi)\to0,
\end{equation}
then
\begin{equation}\label{interm20}
\frac{\alpha f(\xi)}{(f^m)'(\xi)/\xi}=\frac{\alpha\xi^2}{mf(\xi)^{m-1}Y(\xi)}=\frac{2X(\xi)}{Y(\xi)}\to0
\end{equation}
and finally
\begin{equation}\label{interm21}
\frac{\xi^{\sigma}f(\xi)^p}{(f^m)'(\xi)/\xi}=\frac{\xi^{\sigma+2}f(\xi)^p}{mf(\xi)^mY(\xi)}=C\frac{X^{(m-p)/(m-1)}}{Y}\to0,
\end{equation}
since $(m-p)/(m-1)>1$. Gathering the limits in \eqref{interm19}, \eqref{interm20} and \eqref{interm21}, we find that the first approximation of the local behavior on the orbits going out of $P_0$ tangent to the $Y$ axis is given by the joint effect of the first two terms in \eqref{ODE} (having the same homogeneity), that is
$$
(f^m)''(\xi)+\frac{1}{\xi}(f^m)'(\xi)\sim0,
$$
which leads to \eqref{beh.P02} by integration. The limit is taken as $\xi\to0$, as in the other possible cases $\xi\to\xi_0\in(0,\infty)$ or $\xi\to\infty$ we reach an immediate contradiction with the fact that $X\to0$, we omit these easy details which go in the same way as in the proofs of Lemmas \ref{lem.P0} and \ref{lem.P1}.
\end{proof}
Regarding the global analysis, despite the fact that we are no longer dealing with a saddle-saddle connection, we need to look at the unique orbit going out of $P_0$ tangent to the eigenvector $(1,1)$, which is similar to the one going out of $P_0$ in dimensions $N\geq3$. The analysis for $K$ close to zero brings a difference: the two branches $Y_1(X)$ and $Y_2(X)$ defined in \eqref{branches} obviously intersect at $X=0$ as they start from the same point. The function $\Delta(X)$ becomes
$$
\Delta(X)=X\left[(m-1)^2X+8m-4KmX^{(1-p)/(m-1)}\right]
$$
and for $K$ sufficiently small one can check that the term in brackets above is always positive. Indeed, on the one hand
\begin{equation}\label{interm22}
8m>4KmX^{(1-p)/(m-1)}, \qquad {\rm for} \ 0<X<\left(\frac{2}{K}\right)^{(m-1)/(1-p)},
\end{equation}
while on the other hand
\begin{equation}\label{interm23}
(m-1)^2X>4KmX^{(1-p)/(m-1)}, \qquad {\rm for} \ X^{(m+p-2)/(m-1)}>\frac{4Km}{(m-1)^2},
\end{equation}
and the two intervals of $X$ in which \eqref{interm22} and \eqref{interm23} hold true obviously overlap for $K$ sufficiently small if $m+p\geq2$, proving that for $K>0$ sufficiently small the branches $Y_1(X)$ and $Y_2(X)$ do not intersect again and the regions (I), (II), (III), (IV) illustrated in Figure \ref{fig1} are the same. The rest of the global analysis in both cases $m+p>2$ and $m+p=2$ is perfectly similar for $N=2$ since the term $(N-2)$ is no longer essential in the remaining estimates, as an inspection of the proofs shows.

\subsection{Dimension $N=1$}\label{subsec.N1}

In dimension $N=1$ we have again some differences concerning the local analysis of the critical points $P_0=(0,0)$ and $P_1=(0,1/m)$, more precisely
\begin{lemma}\label{lem.P01}
For $N=1$, the critical point $P_0=(0,0)$ is an unstable node. There is a unique orbit going out of $P_0$ tangent to the direction of the vector $(1,2)$ which contains good profiles such that $f(0)=A>0$, $f'(0)=0$. All the other orbits contain profiles such that $f(0)=A>0$ with any possible slope $f'(0)=B\neq0$. The critical point $P_1=(0,1/m)$ is a saddle point. The only orbit going out of this point contains profiles such that
\begin{equation}\label{interm24}
f(\xi)\sim D\xi^{1/m}, \qquad {\rm as} \ \xi\to0, \ D>0 \ {\rm free \ constant}.
\end{equation}
\end{lemma}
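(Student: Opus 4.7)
The approach is to replicate the local analyses of Lemmas \ref{lem.P0} and \ref{lem.P1} specialized to $N=1$, noting that the only structural change in the system \eqref{PPSyst} is that the coefficient $-(N-2)=1$ now contributes a positive linear term $+Y$ to the second equation, shifting the spectra of the linearizations at $P_0$ and $P_1$. First, I linearize at $P_0=(0,0)$ to obtain
$$
M(P_0)=\begin{pmatrix} 2 & 0 \\ 2 & 1 \end{pmatrix},
$$
whose eigenvalues $\lambda_1=2$ and $\lambda_2=1$ are both strictly positive, showing that $P_0$ is an unstable node. The associated eigenvectors are $v_1=(1,2)$ (for $\lambda_1$) and $v_2=(0,1)$ (for $\lambda_2$). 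By the classical theory of unstable nodes (see \cite[Section 2.10]{Pe}), exactly one orbit leaves $P_0$ tangent to the fast direction $v_1$, while a one-parameter family of orbits leaves tangent to the slow direction $v_2$.

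For the distinguished orbit tangent to $v_1$, the relation $Y\sim 2X$, substituted in \eqref{change}, reproduces line-by-line the computation of Lemma \ref{lem.P0} and yields
$$
f(\xi)\sim\left(D+\frac{\alpha(m-1)}{2m}\xi^2\right)^{1/(m-1)},\qquad D>0,
$$
so that $f(0)=D^{1/(m-1)}>0$ and $f'(0)=0$. The proof that this asymptotic is reached only as $\xi\to0$ is identical to the corresponding step in Lemma \ref{lem.P0}, ruling out $\xi\to\xi_0\in(0,\infty)$ via $X\to0$ and $\xi\to\infty$ via $Y\to0$.

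For the remaining orbits, I invert the change of variables: given a profile with $f(0)=A>0$ and $f'(0)=B\neq 0$, formula \eqref{change} gives directly
$$
X(\xi)\sim\frac{\alpha A^{1-m}}{2m}\xi^2,\qquad Y(\xi)\sim\frac{B}{A}\xi,\qquad \frac{X(\xi)}{Y(\xi)}\to 0,
$$
so the corresponding trajectory approaches $P_0$ tangent to the $Y$-axis, that is tangent to $v_2$, showing that all such profiles belong to the slow one-parameter family. For $P_1=(0,1/m)$, the linearization is
$$
M(P_1)=\begin{pmatrix} (m+1)/m & 0 \\ (m+1)/m & -1 \end{pmatrix},
$$
with eigenvalues $(m+1)/m>0$ and $-1<0$, so $P_1$ is a saddle. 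The unique orbit leaving $P_1$ is tangent to the unstable eigenvector, along which $Y\to 1/m$, hence $\xi f'(\xi)/f(\xi)\sim 1/m$, which integrates to the asymptotic \eqref{interm24}. The standard contradiction argument (based on $X\to 0$ forcing $f$ to blow up at a finite $\xi_0>0$, or on $\xi^2f^{1-m}\to 0$ failing as $\xi\to\infty$) localizes this behavior at $\xi\to 0$.

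The main delicate point I anticipate is to match \emph{all} orbits leaving $P_0$ along $v_2$ with profiles having arbitrary slope $B=f'(0)\neq 0$: the substitution above gives one inclusion, and the converse requires that the single fast orbit tangent to $v_1$ is already exhausted by the parameter $D$ in the good-profile family, so that the unstable manifold theorem leaves precisely one free parameter in the slow family, corresponding bijectively to the free slope $B$. Once this identification is in place the four assertions of the Lemma (type of $P_0$, type of $P_1$, the description of the unique good orbit at $P_0$, and the description of the unique unstable orbit at $P_1$) follow immediately from the local computations outlined above.
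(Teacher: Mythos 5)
Your treatment of $P_0$ (unstable node, eigenvalues $2$ and $1$, eigenvectors $(1,2)$ and $(0,1)$), of the distinguished fast orbit (via $Y\sim 2X$, reproducing \eqref{interm1} with $N=1$), and of the saddle $P_1=(0,1/m)$ (eigenvalues $(m+1)/m$ and $-1$, leading to \eqref{interm24}) coincides with the paper's proof, including the exclusion arguments localizing the asymptotics at $\xi\to0$. That part is correct and essentially identical in route.

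The one place where you fall short is the assertion that \emph{all the other orbits} (those tangent to the slow direction $v_2=(0,1)$) \emph{contain} profiles with $f(0)=A>0$ and arbitrary slope $f'(0)=B\neq0$. You prove only the reverse inclusion: starting from a profile with data $(A,B)$, $B\neq0$, you compute $X\sim\frac{\alpha A^{1-m}}{2m}\xi^2$, $Y\sim\frac{B}{A}\xi$ and conclude tangency to the $Y$-axis. The statement of the lemma requires the forward direction, i.e.\ that an orbit leaving $P_0$ tangent to $v_2$ cannot carry some other profile behavior (say $f(0)=0$, or an unbounded $f$), and your proposed fix --- a parameter-counting argument via the unstable manifold theorem --- is left as a sketch and is not obviously airtight, since it presupposes that every slow orbit is realized by \emph{some} profile of the two-parameter family $(A,B)$. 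The paper closes this directly: on the slow orbits one has $Y/X\to\pm\infty$, and keeping the dominant terms in \eqref{PPSyst} gives $\dot X\sim 2X$, $\dot Y\sim Y$, hence $Y\sim CX^{1/2}$; translating through \eqref{change} this reads $f^{(m-3)/2}(\xi)f'(\xi)\sim D$, which integrates to
\begin{equation*}
f(\xi)\sim\left(D_1+D\xi\right)^{2/(m-1)},\qquad D_1>0,\ D\neq0,
\end{equation*}
so that $f(0)=D_1^{2/(m-1)}>0$ and $f'(0)$ takes any nonzero value. Adding this short computation (and the usual argument that the expansion can only hold as $\xi\to0$) would complete your proof and bring it in line with the paper's.
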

\begin{proof}
The linearization of the system \eqref{PPSyst} in a neighborhood of the critical points $P_0$ and $P_1$ has the matrices
$$
M(P_0)=\left(
         \begin{array}{cc}
           2 & 0 \\
           2 & 1\\
         \end{array}
       \right), \qquad
M(P_1)=\left(
         \begin{array}{cc}
           \frac{m+1}{m} & 0 \\
           \frac{m+1}{m} & -1 \\
         \end{array}
       \right),
$$
thus $P_0$ is an unstable node and $P_1$ is a saddle point. The unique orbit going out of $P_1$ contains profiles such that $Y=1/m$ and $X\to0$, which in terms of profiles gives
$$
\frac{f'(\xi)}{f(\xi)}\sim\frac{1}{m\xi},
$$
leading to the behavior given by \eqref{interm24} after integration. With respect to the point $P_0$, the matrix $M(P_0)$ has eigenvalues $\lambda_1=2$ and $\lambda_2=1$ with corresponding eigenvectors $e_1=(1,2)$ and $e_2=(0,1)$. Since $\lambda_2<\lambda_1$, there exists a unique orbit going out of $P_0$ tangent to the eigenvector $e_1$, and the behavior of the profiles contained in it is given by
$$
\frac{X}{Y}\sim\frac{1}{2},
$$
that gives good profiles with local behavior given by \eqref{interm1} as $\xi\to0$, as it is obtained by following line to line the end of the proof of Lemma \ref{lem.P0}. All the other orbits going out of $P_0$ are tangent to the eigenvector $e_2$, thus $Y/X\to\pm\infty$. We infer then by inspection of the system \eqref{PPSyst} and keeping for the first order approximation only the dominating terms that
$$
\dot{X}\sim2X, \qquad \dot{Y}\sim Y,
$$
hence by integration $Y\sim CX^{1/2}$, which in terms of profiles becomes
$$
f^{(m-3)/2}(\xi)f'(\xi)\sim D
$$
or equivalently
\begin{equation}\label{interm25}
f(\xi)\sim\left(D_1+D\xi\right)^{2/(m-1)}, \qquad D_1>0, \ D\neq0 \ {\rm free \ constants}
\end{equation}
and one can check that this behavior holds true as $\xi\to0$ by standard arguments already employed in the proof of Lemmas \ref{lem.P0} and \ref{lem.P1}. We conclude from \eqref{interm25} that such profiles satisfy $f(0)>0$ and $f'(0)$ can take any non-zero value, as claimed.
\end{proof}
This analysis shows that we have to follow again the unique orbit going out of $P_0$ in direction of the eigenvector $e_1=(1,2)$. The rest of the analysis is practically similar as the one performed in dimension $N\geq3$ and we omit the details here.

\section{Non-existence in the case $m+p<2$}\label{sec.lower}

The analysis of the remaining range $m+p<2$ goes with similar arguments as in the previous sections and we will be rather brief in order to avoid extending too much the current work. Since the local analysis of the critical points in the plane $P_0$ and $P_1$ is completely similar as in Lemmas \ref{lem.P0} and \ref{lem.P1} (or their analogous Lemmas \ref{lem.P0P1} and \ref{lem.P01} in dimensions $N=2$, respectively $N=1$), thus the differences appear again when analyzing the critical points at infinity. We notice that $(m-p)/(m-1)>2$ in this case, thus the term
$$
-K\overline{X}^{(2m-p-1)/(m-1)}W^{(m+p-2)/(m-1)}
$$
becomes the dominating negative power of $W$ in \eqref{interm0}. Thus, following the theory in \cite[Section 3.10]{Pe}, we have to take as factor the inverse of the highest power of $W$, which is now $W^{(m-p)/(m-1)}$, letting now
$$
P^*(\overline{X},\overline{Y},W)=W^{(m-p)/(m-1)}P\left(\frac{\overline{X}}{W},\frac{\overline{Y}}{W}\right), \ \  Q^*(\overline{X},\overline{Y},W)=W^{(m-p)/(m-1)}Q\left(\frac{\overline{X}}{W},\frac{\overline{Y}}{W}\right)
$$
and get instead of \eqref{interm0} the following equation
\begin{equation*}\label{interm0bis}
\begin{split}
\overline{X}Q^*(\overline{X},\overline{Y},W)&-\overline{Y}P^*(\overline{X},\overline{Y},W)=-m\overline{X}\overline{Y}^2W^{(m-p)/(m-1)-2}\\
&-(N-2)\overline{X}\overline{Y}W^{(m-p)/(m-1)-1}+2\overline{X^2}W^{(m-p)/(m-1)-1}\\
&-(m-1)\overline{X}^2\overline{Y}W^{(m-p)/(m-1)-2}-K\overline{X}^{(2m-p-1)/(m-1)}\\
&-2\overline{X}\overline{Y}W^{(m-p)/(m-1)-1}+(m-1)\overline{X}\overline{Y}^2W^{(m-p)/(m-1)-2}=0,
\end{split}
\end{equation*}
and we infer from letting $W=0$ in the above equation that
$$
K\overline{X}^{(2m-p-1)/(m-1)}=0,
$$
thus the only critical points are $Q_2=(0,1,0)$ and $Q_3=(0,-1,0)$ (in variables $(\overline{X},\overline{Y},W)$). It thus follows that the orbit going out of $P_0$ must enter the critical point $Q_3$, which is characterized by the fact that $X\to\infty$, $Y\to-\infty$ and $Y/X\to-\infty$ on the orbits approaching it. A direct local analysis of the critical point is very involved due to the fact that it requires further changes of variable, but at a formal level, we find by keeping the dominating terms in the two equations of the system \eqref{PPSyst} that
$$
\frac{dY}{dX}\sim\frac{mY^2+KX^{(m-p)/(m-1)}}{(m-1)XY},
$$
which after integration reads
\begin{equation}\label{interm27}
Y^2+\frac{2K}{m+p}X^{(m-p)/(m-1)}-CX^{2m/(m-1)}=0, \qquad C\in\real \ {\rm free \ constant}
\end{equation}
Noticing that $2m/(m-1)>(m-p)/(m-1)$ we readily infer from \eqref{interm27} that $Y\sim CX^{m/(m-1)}$ as $X\to\infty$, with $C<0$ (the case $C>0$ corresponding to orbits going out of $Q_2$ where $Y/X\to+\infty$). We further get by passing to profiles and undoing the change of variable \eqref{change} that
$$
f^{m-1}(\xi)f'(\xi)\sim C\xi^{(m+1)/(m-1)},
$$
which leads to the same behavior as in \eqref{interm6} with the subsequent analysis at the end of the proof of Lemma \ref{lem.Q23} giving that such behavior is taken as $\xi\to\xi_0\in(0,\infty)$. We thus infer that no interface behavior is possible in the range $m+p<2$, and moreover, all the good profiles have a change of sign at some finite positive point. This concludes the proof of Theorem \ref{th.2}. A fully rigorous proof can be done by changing the phase plane system in the line with \cite[Section 7]{IS20b}.

\section{Transformations to other reaction-convection-diffusion equations}\label{sec.transf}

In this final section, we introduce a transformation that is, up to our knowledge, new, mapping radially symmetric solutions to our Eq. \eqref{eq1} into solutions to reaction-convection-diffusion equations, allowing thus for the classification of the special solutions in form of \emph{traveling waves} for the latter. More precisely, let us begin with the radially symmetric form of Eq. \eqref{eq1}, that is
\begin{equation}\label{eq1.radial}
u_t=(u^m)_{rr}+\frac{N-1}{r}u_r+r^\sigma u^p.
\end{equation}
We next introduce the general change of variable
\begin{equation}\label{interm26}
u(r,t)=s^{\gamma/\theta}w(s,\tau), \qquad s=r^\theta, \tau =\theta^2 t,
\end{equation}
where $\gamma$ and $\theta$ will be chosen later. We obtain from \eqref{eq1.radial} the partial differential equation satisfied by the new function $w$
\begin{eqnarray}\label{eq2}
w_\tau&=&s^{\frac{(m-1)\gamma-2}{\theta}+2}(w^m)_{ss}+\frac{2m\gamma+N+\theta-2}\theta s^{\frac{(m-1)\gamma-2}{\theta}+1}(w^m)_s+\nonumber\\
&+&\frac{m\gamma(m\gamma+N-2)}{\theta^2}s^{\frac{(m-1)\gamma-2}{\theta}}w^m+\frac{s^{\frac{(p-1)\gamma+\sigma}{\theta}}}{\theta^2}w^p
\end{eqnarray}

%
	
\medskip

\noindent \textbf{The transformation}. An interesting application of the previous generic change of variable consists in regarding Eq. \eqref{eq2} as an equation of Euler type. To this end, we let
$$
\gamma=\frac{2}{m-1}, \qquad \theta=1,
$$
with $\sigma=2(1-p)/(m-1)$, as we consider throughout the whole paper. By further letting $s=e^y$ and performing straightforward calculations, we map Eq. \eqref{eq1} into the following reaction-convection-diffusion equation in one dimension
\begin{equation}\label{eq4}
w_\tau=(w^m)_{yy}+\frac{N(m-1)+2(m+1)}{(m-1)}(w^m)_y+\frac{2m(N(m-1)+2)}{(m-1)^2}w^m+w^p,
\end{equation}
where $N>0$ becomes in fact a parameter of the equation and can be chosen as a real number in order to get more general coefficients. It is a well-known fact that equations of the form of Eq. \eqref{eq4} have usually special solutions in the form of \emph{traveling waves}, which are also a class of eternal solutions. More precisely, such solutions have the general form $w(y,\tau)=f(y+c\tau)$, where $f$ is the profile of the wave and $c$ is the speed of advance of it. We next show that the traveling waves of Eq. \eqref{eq4} are mapped through the transformation \eqref{interm26} into self-similar solutions of exponential type to Eq. \eqref{eq1}. Starting from a generic traveling wave and undoing the change of variables we get
$$
w(s,\tau)=f(\ln(s)+c\tau)=f(\ln(s)+\ln(e^{c\tau}))=f(\ln(se^{c\tau}))
$$
whence
$$
u(r,t)=r^\gamma f(\ln(re^{ct}))=e^{-c\gamma t}(re^{ct})^\gamma f(\ln(re^{ct}))=e^{-\alpha t}f_1(re^{\beta t})
$$
with $\beta=c$ and $\alpha=c\gamma=2\beta/(m-1)$, obtaining thus an \emph{eternal solution} to Eq. \eqref{eq1}. Theorems \ref{th.1}, \ref{th.1bis} and \ref{th.2} can be straightforwardly mapped into results stating either the existence and uniqueness of traveling waves with minimal speed $c=c^*$ for Eq. \eqref{eq4} or non-existence of any traveling wave. We gather them in a single statement below.
\begin{theorem}\label{th.TW}
If $m+p\geq2$, then there exists a unique $c^*>0$ such that for any $c\geq c^*$, there exists a unique (up to translations) traveling wave with speed $c$. All these traveling waves are compactly supported to the right, and if $m+p>2$ the unique traveling wave with minimal speed $c=c^*$ has an interface behavior as in \eqref{TypeI}, while all the traveling waves with speed $c>c^*$ have interface behavior as in \eqref{TypeII}. Finally, if $m+p<2$ there are no traveling waves to Eq. \eqref{eq4}.
\end{theorem}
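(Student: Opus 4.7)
The plan is to reduce Theorem \ref{th.TW} entirely to the already-established Theorems \ref{th.1}, \ref{th.1bis} and \ref{th.2} via the correspondence computed just above. Given a traveling wave $w(y,\tau)=f(y-c\tau)$ for Eq. \eqref{eq4}, the inverse of the transformation \eqref{interm26} (with $\gamma=2/(m-1)$ and $\theta=1$) produces $u(r,t)=e^{c\gamma t}F(re^{-ct})$ where $F(\xi):=\xi^{\gamma}f(\ln\xi)$, which is exactly an eternal solution of Eq. \eqref{eq1} in the self-similar form \eqref{SSS} with $\alpha=c\gamma$, $\beta=c$, the compatibility \eqref{relation} being automatic. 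The inverse direction is analogous, so the correspondence is bijective at the level of profiles. A crucial point is that translating $f$ by $y_0$ in $y$ is equivalent to the rescaling $\xi\mapsto e^{y_0}\xi$ on the eternal-solution side, which is precisely the freedom \eqref{resc} exploited throughout Section \ref{sec.large}; thus ``unique up to \eqref{resc}'' on the profile side becomes ``unique up to translation in $y$'' on the traveling-wave side.

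With this dictionary in hand, I would define $c^{*}:=\alpha^{*}(m-1)/2$, where $\alpha^{*}>0$ is the critical self-similar exponent of Theorem \ref{th.1}; in the balanced case $m+p=2$, the explicit value \eqref{alpha.equal} gives $c^{*}=2\sqrt{m}$. Theorems \ref{th.1} and \ref{th.1bis} then provide, for every $\alpha\in[\alpha^{*},\infty)$, a unique (up to \eqref{resc}) good profile with interface for Eq. \eqref{ODE}, and none for $\alpha\in(0,\alpha^{*})$. Transporting these statements through the correspondence yields existence and uniqueness (up to translations in $y$) of a traveling wave with any speed $c\geq c^{*}$, and non-existence for $0<c<c^{*}$. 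The wavefront / compact-support-to-the-right property of each wave is inherited directly from the interface at $\xi_0\in(0,\infty)$ of $F$: since $f(y)=e^{-\gamma y}F(e^{y})$ vanishes at $y_0:=\ln\xi_0$ together with its mass flux, extending $f$ by zero for $y>y_0$ yields a compactly-supported-to-the-right weak traveling wave.

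The classification of the interface behaviors is obtained by substituting the asymptotics \eqref{TypeI} and \eqref{TypeII} of $F$ into $f(y)=e^{-\gamma y}F(e^{y})$ and expanding near $y_0$; using $\xi_0-e^{y}=e^{y_0}(1-e^{y-y_0})\sim e^{y_0}(y_0-y)$ as $y\to y_0^{-}$, the Type I behavior \eqref{TypeI} transfers to $f(y)\sim C'(y_0-y)_{+}^{1/(m-1)}$ and the Type II behavior \eqref{TypeII} transfers to $f(y)\sim C'(y_0-y)_{+}^{1/(1-p)}$. Combined with the dichotomy in Theorem \ref{th.1}, this attaches the Type I interface to the unique minimal-speed wave at $c=c^{*}$ and the Type II interface to every wave with $c>c^{*}$ in the regime $m+p>2$; in the regime $m+p=2$ the two exponents coincide (as in Theorem \ref{th.1bis}) and only one interface behavior survives.

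Finally, when $m+p<2$, a traveling wave for \eqref{eq4} would, through the correspondence, produce an eternal solution of \eqref{eq1} whose profile---necessarily vanishing at a finite $\xi_0$, since a compactly-supported-to-the-right wavefront is a finite point---would be a good profile with interface for Eq. \eqref{ODE}, directly contradicting Theorem \ref{th.2}. The main obstacle in the plan is not substantive but bookkeeping: one must carefully track the sign convention for $c$ so that positive speeds correspond to positive self-similar exponents (the ``rightward'' ansatz $f(y-c\tau)$ is chosen for compatibility with the form \eqref{SSS}), and verify that the one-parameter translation freedom of the wave exhausts the one-parameter rescaling freedom \eqref{resc} of the profile, so that the word ``unique'' carries over faithfully through the correspondence.
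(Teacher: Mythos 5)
Your proposal is correct and follows essentially the same route as the paper, which likewise deduces Theorem \ref{th.TW} by transporting Theorems \ref{th.1}, \ref{th.1bis} and \ref{th.2} through the change of variables \eqref{interm26} with $\gamma=2/(m-1)$, $\theta=1$ and $s=e^{y}$, identifying $\beta=c$ and $\alpha=c\gamma$. The details you add --- the translation-in-$y$ versus rescaling \eqref{resc} dictionary, the transfer of the interface exponents via $\xi_0-e^{y}\sim e^{y_0}(y_0-y)$, and the sign convention for $c$ --- are exactly the steps the paper leaves implicit as ``straightforward''.
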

In general, equations of the form \eqref{eq1} with all possible combinations of $m$, $p$ and $\sigma$ have a bunch of possible and interesting transformations into equations either of the same form (self-maps) or different. A thorough study of such mappings and their applications in the form of new solutions and behaviors will be addressed in a forthcoming work.

\bigskip

\noindent \textbf{Acknowledgements} A. S. is partially supported by the Spanish project MTM2017-87596-P.

\bibliographystyle{plain}

\begin{thebibliography}{1}

\bibitem{DS06}
P.~Daskalopoulos and N.~Sesum, \emph{Eternal solutions to the Ricci flow on $\real^2$}, Int. Math. Res. Not.,  2006, Art. ID 83610, 20 pp.

\bibitem{GPV00}
V. Galaktionov, L. A. Peletier and J. L. V\'azquez, \emph{Asymptotics of
the fast-diffusion equation with critical exponent}, SIAM J. Math.
Anal. \textbf{31} (2000), no. 5, 1157-1174.

\bibitem{GiBook}
B. H. Gilding and R. Kersner, \emph{Traveling Waves in Nonlinear Diffusion-Convection Reaction}, in "Progress in Nonlinear Differential Equations and Their Applications", Birkhauser, 2004.

\bibitem{GH}
J. Guckenheimer and Ph. Holmes, \emph{Nonlinear oscillation, dynamical systems and bifurcations of vector fields}, Applied Mathematical Sciences, vol. 42, Springer-Verlag, New York, 1990.

\bibitem{IL13}
R. G. Iagar, and Ph. Lauren\ced{c}ot, \emph{Eternal solutions to a singular diffusion equation with critical gradient absorption}, Nonlinearity, \textbf{26} (2013), no. 12, 3169-3195.

\bibitem{IS20b}
R. G. Iagar and A. S\'anchez, \emph{Self-similar blow-up profiles for a reaction-diffusion equation with strong weighted reaction}, Adv. Nonl. Studies, \textbf{20} (2020), no. 4, 867-894.

\bibitem{IS21b}
R. G. Iagar and A. S\'anchez, \emph{Self-similar blow-up profiles for a reaction-diffusion equation with critically strong weighted reaction}, J. Dynam. Differential Equations (to appear, Accepted November 2020), Preprint ArXiv no. 2006.01076.

\bibitem{ISV08}
R. Iagar, A. S\'anchez and J. L. V\'azquez, \emph{Radial equivalence for
the two basic nonlinear degenerate diffusion equations}, J. Math.
Pures Appl. \textbf{89} (2008), no. 1, 1-24.

\bibitem{dPS00}
A. de Pablo and A. S\'anchez, \emph{Global travelling waves in reaction-convection-diffusion equations}, J. Differential Equations, \textbf{165} (2000), no. 2, 377-413.

\bibitem{dP94}
A. de Pablo, \emph{Large-time behaviour of solutions of a reaction-diffusion equation}, Proc. Roy. Soc. Edinburgh Sect. A, \textbf{124} (1994), no. 2, 389-398.

\bibitem{dPV90}
A. de Pablo and J. L. V\'azquez, \emph{The balance between strong reaction and slow diffusion}, Comm. Partial Differential Equations, \textbf{15} (1990), no. 2, 159-183.

\bibitem{Pe}
L. Perko, \emph{Differential equations and dynamical systems. Third
edition}, Texts in Applied Mathematics, \textbf{7}, Springer Verlag,
New York, 2001.

\bibitem{VazSm}
J. L. V\'azquez, \emph{Smoothing and Decay Estimates for Nonlinear
Diffusion Equations. Equations of Porous Medium Type}, Oxford Univ.
Press, Oxford, 2006.

\bibitem{VazquezSurvey}
J.L.~V\'azquez, \emph{Asymptotic behaviour of nonlinear parabolic equations. Anomalous exponents}, in ``Degenerate diffusions'' (Minneapolis, MN, 1991), 215--228, IMA Vol. Math. Appl., 47, Springer, New York, 1993.

\end{thebibliography}

\end{document}